\DeclareMathOperator{\pr}{pr}
\DeclareMathOperator{\im}{im}
\DeclareMathOperator{\End}{End}
\DeclareMathOperator{\Aut}{Aut}
\DeclareMathOperator{\id}{id}
\DeclareMathOperator{\rk}{rk}
\DeclareMathOperator{\Bott}{Bott}
\DeclareMathOperator{\ver}{vertical}
\DeclareMathOperator{\pbw}{pbw}
\DeclareMathOperator{\formal}{formal}
\newcommand{\NN}{\mathbb{N}} 
\newcommand{\ZZ}{\mathbb{Z}} 
\newcommand{\RR}{\mathbb{R}} 
\newcommand{\CC}{\mathbb{C}} 
\newcommand{\KK}{\Bbbk}
\newcommand{\into}{\hookrightarrow}
\newcommand{\onto}{\twoheadrightarrow}
\newcommand{\xto}[1]{\xrightarrow{#1}}
\newcommand{\dual}{^{\vee}}
\newcommand{\inv}{^{-1}}
\newcommand{\frakg}{\mathfrak g}
\newcommand{\frakh}{\mathfrak h}
\newcommand{\cM}{\mathcal{M}}
\newcommand{\cR}{\mathcal{R}}
\newcommand{\cO}{\mathcal{O}}
\newcommand{\cF}{\mathcal{F}}
\newcommand{\cU}{\mathcal{U}}
\newcommand{\cI}{\mathcal{I}}
\newcommand{\sD}{\mathscr{D}}
\newcommand{\sE}{\mathscr{E}}
\newcommand{\pair}[2]{\langle #1, #2 \rangle}
\newcommand{\argument}{-}
\newcommand{\koszul}{\mathsf{k}}
\newcommand{\euler}{\epsilon}
\newcommand{\htp}{\mathsf{h}}
\newcommand{\htpd}{\mathsf{h}_\natural}
\newcommand{\vdop}{\sD_{\ver}}
\newcommand{\pa}{\mathfrak{p}} 
\newcommand{\ia}{\mathfrak{i}} 
\newcommand{\pb}{\mathfrak{q}} 
\newcommand{\ib}{\mathfrak{j}} 
\newcommand{\hotimes}{\mathbin{\widehat{\otimes}}}
\newcommand{\tail}[1]{\Gamma(\Lambda^\bullet L\dual\otimes\widehat{S}^{\geq #1}B\dual)}
\newcommand{\homogeneous}[1]{\Gamma(\Lambda^\bullet L\dual\otimes S^{#1}B\dual)}
\newcommand{\head}[1]{\Gamma(\Lambda^\bullet L\dual\otimes S^{< #1}B\dual)}
\newcommand{\whole}{\Gamma(\Lambda^\bullet L\dual\otimes\widehat{S}B\dual)}
\title{Vertical isomorphisms of Fedosov dg manifolds associated with a Lie pair}
\thanks{Research partially supported by MoST/NSTC Grants 110-2115-M-007-001-MY2 and 112-2115-M-007-016-MY3.}
\author{Hua-Shin Chang}
\address{Department of Mathematics, National Tsing Hua University}
\email{eason901001@gmail.com } 
\author{Hsuan-Yi Liao}
\address{Department of Mathematics, National Tsing Hua University}
\email{hyliao@math.nthu.edu.tw}
\begin{document}

\begin{abstract}
We investigate vertical isomorphisms of Fedosov dg manifolds associated with a Lie pair $(L,A)$, i.e.\ a pair of a Lie algebroid $L$ and a Lie subalgebroid $A$ of $L$. The construction of Fedosov dg manifolds involves a choice of a splitting and a connection. We prove that, given any two choices of a splitting and a connection, there exists a unique vertical isomorphism, determined by an iteration formula, between the two associated Fedosov dg manifolds. As an application, we provide an explicit formula for the map $\pbw_2\inv \circ \pbw_1$ associated with two Poincar\'{e}--Birkhoff--Witt isomorphisms that arise from two choices of a splitting and a connection.
\end{abstract}

\maketitle

\tableofcontents

\section*{Introduction}

Fedosov resolutions, which can be interpreted as functions on Fedosov dg manifolds, played a crucial role in globalizing Kontsevich's formality theorem to smooth manifolds \cite{MR2102846,MR2062626,MR2199629}. The construction of Fedosov dg manifolds was originally introduced in \cite{MR1327535} for smooth manifolds, as a classical analogue of Fedosov's quantization \cite{MR1293654}, and has been extended to Lie pairs and to dg manifolds as an important step of establishing formality theorems and Duflo--Kontsevich-type theorems for Lie pairs and for dg manifolds --- see \cite{MR3964152,MR3754617,MR4584414}.  Furthermore, the consideration of Fedosov dg manifolds has also  led to new insights into Atiyah classes and Todd classes \cite{MR3724780,MR3964152}. 

By a {\em Lie pair} $(L,A)$, we mean a pair consisting of a Lie algebroid $L$ over a smooth manifold $M$ and a Lie subalgebroid $A$ of $L$. The structure of Lie pairs arises naturally in various geometrical contexts. For instance, a regular foliation $\cF$ of a smooth manifold $M$ determines a Lie pair $(T_M,T_\cF)$, where $T_M$ is the tangent bundle of $M$ and $T_\cF$ is the integrable distribution on $M$ tangent to $\cF$. Another example arises from complex geometry: if $X$ is a complex manifold, then $(T_X \otimes \CC, T_X^{0,1})$ is a Lie pair. Notably, Lie pairs also offer a framework for studying Lie algebra actions on a smooth manifold \cite{MR3650387,MR3964152}. 

Let $(L,A)$ be a Lie pair over a smooth manifold $M$, and $B$ the quotient vector bundle $B = L/A$. 
In \cite{MR4150934}, it was proved that for each choice of (i) a splitting of the short exact sequence of vector bundles 
\begin{equation}\label{intro:ShortExactSeq}
0 \to A \to L \to B \to 0
\end{equation}
and (ii) a torsion-free $L$-connection $\nabla$ on $B$, one can construct a homological vector field of the form
$$
Q = -\koszul + d_L^\nabla +X,
$$
called a \emph{Fedosov vector field}, on the (formal) graded manifold $L[1] \oplus B$ by Fedosov's iteration techniques \cite{MR1293654}.  
Here, $\koszul$ is the Koszul vector field defined in $\eqref{eq:KoszulVF}$
and $d_L^\nabla$ is the covariant derivative
associated with the $L$-connection $\nabla$. 
This construction results in a cochain complex $(C^\infty(L[1] \oplus B),Q)$ that is quasi-isomorphic to the Chevalley--Eilenberg complex $(\Gamma(\Lambda^\bullet A\dual),d_A)$ of the Lie algebroid $A$.
See Section~\ref{sed:FedosovDGLiePair} for details. 
The pair  $(L[1] \oplus B, Q)$ is referred to as a \emph{Fedosov dg manifold} associated with the Lie pair $(L,A)$. 

The algebra $\cR = \Gamma(\Lambda^\bullet L\dual \otimes \widehat{S}B\dual)$
of functions on $L[1] \oplus B$ is the $\cI$-adic completion of the graded algebra $\Gamma(\Lambda^\bullet L\dual \otimes {S}B\dual)$. Here, $\cI$ denotes the ideal generated by $\Gamma(\Lambda^\bullet L\dual \otimes B\dual)$. 
An endomorphism of the algebra $\cR$ is called \emph{vertical} if it is $\Gamma(\Lambda^\bullet L\dual)$-linear and continuous with respect to the $\cI$-adic topology. Now assume we are given two different choices of a splitting of \eqref{intro:ShortExactSeq} and a torsion-free $L$-connection on $B$, leading to two Fedosov vector fields $Q_1$ and $Q_2$ on $L[1] \oplus B$. The main purpose of this paper is to investigate \emph{vertical isomorphisms of Fedosov dg manifolds} from $(L[1] \oplus B,Q_1)$ to $(L[1] \oplus B,Q_2)$, i.e.\ {\em vertical isomorphisms} of dg algebras from $(\cR,Q_2)$ to $(\cR,Q_1)$. 

The main result of this paper is the following theorem:

\begin{trivlist}
\item {\bf Theorem} (Theorem~\ref{thm:PhiIteration} \& Corollary~\ref{cor:Y=LogPhi}).
{\it Given any two choices of a splitting of the short exact sequence \eqref{intro:ShortExactSeq} and a torsion-free $L$-connection on $B$
leading to two Fedosov vector fields $Q_1$ and $Q_2$ on $L[1]\oplus B$, there exists a unique vertical isomorphism of Fedosov dg manifolds
$\phi:(\cR,Q_2) \to (\cR,Q_1)$.
This vertical isomorphism $\phi$ is characterized by the equation
\begin{equation}\label{intro:PhiIteration}
\phi = 1 + \htpd\eth(\phi) = 1+ \htpd (\Delta Q \circ \phi)+ \htpd ([\koszul + Q_2 ,\phi]), 
\end{equation}
where $\Delta Q=Q_1-Q_2$ and $\eth(\phi)=\Delta Q\circ\phi+[\koszul+Q_2,\phi]$. 
Furthermore, we have $\phi = e^Y = \sum_{n=0}^\infty\frac{1}{n!} Y^n$, where $Y  \in \Gamma(\widehat{S}^{\geq 2}B\dual \otimes B)$ is given by the formula
\begin{equation}\label{intro:Y=LogPhi}
Y = \sum_{j=1}^\infty \sum_{n_1, \cdots, n_j \geq 0} \frac{(-1)^{j+1}}{j} \cdot (\htpd \eth)^{n_1}\big(\htpd (\Delta Q)\big) \circ (\htpd \eth)^{n_2}\big(\htpd (\Delta Q)\big) \circ \cdots \circ (\htpd \eth)^{n_j}\big(\htpd (\Delta Q)\big)  .
\end{equation}
} 
\end{trivlist}

We say that a vertical operator $\phi:\cR\to\cR$ {shifts the $\cI$-adic filtration} by $N \in \ZZ$ (or simply call it an \emph{$N$-shifting} operator) if  
$$
\phi\big(\Gamma(\Lambda^\bullet L\dual \otimes S^k B\dual)\big) \subset \Gamma(\Lambda^\bullet L\dual \otimes \widehat{S}^{\geq k+N} B\dual), \quad \forall \, k \geq 0.
$$
A vertical operator is called an \emph{$\cI$-adic filtration-shifting} vertical operator if it shifts the $\cI$-adic filtration by a certain integer $N$, and we denote the space of all $\cI$-adic filtration-shifting vertical operators
by the symbol $\sE$. 
Our main theorem above is a consequence of the existence of the Koszul-type
homotopy operator $\htpd$ acting on $\sE$. 
The key observations are: (i) there exists a Koszul-type homotopy equation for \emph{vertical differential operators} \cite{MR4325718,MR3964152}, and (ii) if $\phi$ is an $N$-shifting vertical operator, then it can be uniquely decomposed as an (infinite) sum of vertical differential operators $D_q \in \Gamma(\Lambda^\bullet L\dual \otimes \widehat{S}^{\geq q+N} B\dual \otimes S^q B)$:
$$
\phi = \sum_{q=0}^\infty D_q,
$$ 
where the equality means that the series $\sum_{q=0}^\infty D_q$
converges uniformly to $\phi$ with respect to the $\cI$-adic topology
--- see Proposition~\ref{prop:BddOp&DiffOp}. 
Therefore, by applying the homotopy equation to each summand, one can extend the Koszul-type homotopy equation to $\cI$-adic filtration-shifting vertical operators, i.e.\ there exist a Koszul-type homotopy operator 
$$
\htpd:\sE \to \sE, \qquad \phi \mapsto \sum_{q=0}^\infty \htpd(D_q),
$$ 
and a Koszul-type homotopy equation for $\cI$-adic filtration-shifting vertical operators:
$$
\delta \htpd (\phi)+\htpd\delta(\phi) = \phi - \sigma_0(\phi), \qquad \forall \, \phi \in \sE.
$$
Here, $\delta=[\koszul,\argument]$ is the differential
given by the graded commutator with the Koszul vector field $\koszul$,
and $\sigma_0$ is a projection operator.
See Section~\ref{sec:BddBelowVerOp} for details.

\subsubsection*{Application to Kapranov dg manifolds}

Given two different choices of a splitting of the short exact sequence 
\eqref{intro:ShortExactSeq} and a torsion-free $L$-connection on $B$,
one obtains two induced PBW (i.e.\ Poincar\'{e}--Birkhoff--Witt) isomorphisms
$\pbw_1$ and $\pbw_2: \Gamma(SB) \to \frac{\cU(L)}{\cU(L)\Gamma(A)}$.
Due to Sti\'{e}non--Xu's PBW construction of Fedosov vector fields \cite[Theorem~4.7]{MR4150934}, it can be shown that the map 
$$
\phi= \id_{\Lambda^\bullet L\dual}\otimes (\pbw_2\inv\circ \pbw_1)\dual: \Gamma(\Lambda^\bullet L\dual \otimes \widehat{S} B\dual) \to \Gamma(\Lambda^\bullet L\dual \otimes \widehat{S} B\dual)
$$ 
is a vertical isomorphism of Fedosov dg manifolds from $(L[1]\oplus B,Q_1)$ to $(L[1]\oplus B,Q_2)$.   See \cite{MR4325718,MR4150934}. 
Consequently, there exists a unique $Y\in \Gamma(\widehat{S}^{\geq 2}B\dual \otimes B)$ determined by Equation~\eqref{intro:Y=LogPhi} such that $\phi = e^Y$. In other words, 
\begin{equation}\label{intro:pbw2pbw1}
\pair{e^Y (f)}{\theta}= \pair{f}{\pbw_2\inv \circ \pbw_1(\theta)}, \qquad \forall \, f \in \Gamma(\widehat{S}B\dual), \, \theta \in \Gamma(SB).
\end{equation} 

In the case $(L,A) = (T_M,M\times 0)$, the map $\pbw: \Gamma(ST_M) \to \cU(T_M)$ is the fiberwise $\infty$-jet of the geodesic exponential map $\exp:T_M \to M \times M$ along the zero section of $T_M$. See \cite{MR4271478,MR3910470}. Thus, Equation~\eqref{intro:pbw2pbw1} computes the $\infty$-jet of $\exp_{2}\inv \circ \exp_{1}$, which is the coordinate transformation between two geodesic coordinate systems on $M$. 

In \cite{MR4271478}, Laurent-Gengoux, Sti\'{e}non and Xu investigated Kapranov dg manifolds associated with a Lie pair $(L,A)$. The construction of Kapranov dg manifolds depends on a choice of a splitting of the short exact sequence \eqref{intro:ShortExactSeq} and a torsion-free $L$-connection on $B$. According to \cite[Proposition~5.24]{MR4271478}, the isomorphism $\pbw_2\inv \circ \pbw_1$ induces an isomorphism between Kapranov dg manifolds arising from two choices. Therefore, Equation~\eqref{intro:pbw2pbw1} provides an explicit formula of an isomorphism between Kapranov dg manifolds that result from two choices of a splitting and a connection.

\subsection*{Notations and conventions}

We use the symbol $\NN$ to denote the set of positive integers and the symbol $\NN_0$ for the set of nonnegative integers.

We fix a base field $\KK = \RR$ or $\CC$ in this paper. The notation $C^\infty(M)=C^\infty(M,\KK)$ refers to the algebra of smooth functions on a manifold $M$ valued in $\KK$, and $T_M$ refers to $T_M \otimes_\RR \KK$ unless stated otherwise. 

In this paper, grading means $\ZZ$-grading.
We write `dg' for `differential graded.'

Let $R$ be a graded ring. We say $R$ is commutative if $xy = (-1)^{|x||y|} yx$ for all homogeneous elements $x,y \in R$. 

Given an element $x$ in a graded $R$-module $V=\bigoplus_k V^k$,
the notation $|x|=d$ means that $x$ is a homogeneous element
whose degree is $d$, i.e. $x\in V^d$.
We write $V[i]$ to denote the graded $R$-module obtained from $V$
by shifting the grading: $(V[i])^k = V^{i+k}$.

For an $R$-module $V$, the symbol $S^k V$ means the zero $R$-module if $k < 0$. 

By the symbol $\Lambda^\bullet V$, we mean $\bigoplus_p (\Lambda^p V)[-p]$ which is isomorphic to $S(V[-1])$ as graded modules.

For every vector bundle $E \to M$, we define a duality pairing $\pair{\argument}{\argument}:\Gamma(SE) \times \Gamma(\widehat{S}E\dual) \to C^\infty(M)$ by 
$$
\pair{e_1 \odot \cdots \odot e_i}{\epsilon_1 \odot \cdots \odot \epsilon_j} = \begin{cases}
\sum_{\sigma \in S_i} \prod_{k=1}^j \pair{e_k}{\epsilon_{\sigma(k)}} & \text{ if } i=j, \\
0 & \text{ otherwise},
\end{cases}
$$
where $e_1, \cdots, e_i \in \Gamma(E)$ and $\epsilon_1, \cdots, \epsilon_j \in \Gamma(E\dual)$.

A $(i,j)$-shuffle is a permutation of the set $\{1,2,\cdots, i+j\}$ such that 
$$
\sigma(1)< \cdots < \sigma(i) \qquad \text{and} \qquad \sigma(i+1) < \cdots < \sigma(i+j).
$$
The symbol $S_i^j$ denotes the set of $(i,j)$-shuffles. 

Let $\eta^1, \cdots, \eta^r$ be a local coordinate system on a graded manifold. Given a multi-index $J = (j_1, \cdots, j_r) \in \NN_0^r$, we use the following multi-index notations:
\begin{align*}
|J| & = j_1+j_2+\cdots + j_r, \\
\eta^J & = (\eta^1)^{j_1} \cdot (\eta^2)^{j_2} \cdot   \cdots \cdot (\eta^r)^{j_r}, \\
\partial_J  & = \partial_{\eta,J} = \frac{\partial^{j_1}}{\partial (\eta^1)^{j_1}} \circ \frac{\partial^{j_2}}{\partial (\eta^2)^{j_2}} \circ \cdots \circ \frac{\partial^{j_r}}{\partial (\eta^r)^{j_r}}.
\end{align*}

\subsection*{Acknowledgments}

We would like to thank Camille Laurent-Gengoux, Seokbong Seol, Mathieu Sti\'{e}non, Si-Ye Wu and Ping Xu for fruitful discussions and useful comments. 
Chang is grateful to National Center for Theoretical Science for organizing undergraduate research programs.

\section{Preliminaries}

In this section, we briefly review the construction of Fedosov dg manifolds. We refer the reader to \cite{MR4150934} for details.

\subsection{Preliminaries on Lie pairs}

We say that $(L,A)$ is a \textbf{Lie pair} if $A$ is a Lie subalgebroid of a Lie algebroid $L$ over a common base manifold $M$, i.e. the inclusion map $A \into L$ is a morphism of Lie algebroids.

\begin{example}
Following are a few examples of Lie pairs.
\begin{enumerate}
\item 
If $\frakh$ is a Lie subalgebra of a Lie algebra $\frakg$, then $(\frakg,\frakh)$ is a Lie pair over the one-point manifold $\{\ast\}$.
\item
If $\cF$ is a regular foliation on a manifold $M$, then $(T_M,T_\cF)$ is a Lie pair over $M$. 
\item
If $X$ is a complex manifold, then $(T_X \otimes \CC, T_X^{0,1})$ is a Lie pair over $X$.

\item
Let $\frakg$ be a finite-dimensional Lie algebra, and $M$ a manifold. 
Given a $\frakg$-action on $M$, the action Lie algebroid $\frakg \ltimes M$ and the tangent bundle $T_M$ naturally form a matched pair of Lie algebroids \cite{MR1460632}. Thus, one has a Lie pair $(L,A)$, where $L = (\frakg \ltimes M)\bowtie T_M$ and $A = \frakg \ltimes M$. 
See \cite{MR2534186} for its relation with BRST complexes.
\end{enumerate}
\end{example}

Let $L$ be a Lie algebroid over a manifold $M$ with anchor $\rho:L \to T_M $.  Let $E $ be a vector bundle over $M$. An \textbf{$L$-connection} $\nabla$ on $E$ is a $\KK$-bilinear map 
$
\nabla: \Gamma(L) \times \Gamma(E) \to \Gamma(E), \, (l,e) \mapsto \nabla_l e
$
which satisfies the properties
\begin{gather*}
\nabla_{f\cdot l} e = f \cdot \nabla_l e, \\
\nabla_l(f \cdot e) = \rho(l)(f) \cdot e + f \nabla_l e,
\end{gather*}
for any $l \in \Gamma(L)$, $e \in \Gamma(E)$, and $f \in C^\infty (M)$.

The \textbf{Chevalley--Eilenberg differential} of a Lie algebroid $L$ is the linear map 
$$
d_L: \Gamma(\Lambda^p L\dual) \to \Gamma(\Lambda^{p+1}L\dual)
$$ 
defined by 
$$
(d_L\omega)(l_0, \cdots, l_p) = \sum_{i=0}^p (-1)^i \rho(l_i)\big(\omega(l_0, \cdots, \widehat{l_i}, \cdots, l_p)\big) + \sum_{i<j} (-1)^{i+j}\omega([l_i,l_j],l_0,\cdots, \widehat{l_i},\cdots,\widehat{l_j}, \cdots, l_p)
$$
which makes the exterior algebra $\bigoplus_{p=0}^\infty \Gamma(\Lambda^p L\dual)$ into a commutative dg algebra. Given an $L$-connection $\nabla$ on a vector bundle $E \to M$, the \textbf{covariant derivative} is the operator 
$$
d_L^\nabla:\Gamma(\Lambda^p L\dual \otimes E) \to \Gamma(\Lambda^{p+1} L\dual \otimes E)
$$
which maps a section $\omega \otimes e \in \Gamma(\Lambda^p L\dual \otimes E)$ to 
$$
d_L^\nabla(\omega\otimes e) = d_L(\omega) \otimes e + \sum_{i=1}^{\rk L} (\nu_i \wedge \omega)\otimes \nabla_{v_i} e,
$$
where $v_1,\cdots, v_{\rk L}$ is any local frame of the vector bundle $L$, and $\nu_1,\cdots, \nu_{\rk L}$ is its dual frame. The connection $\nabla$ is said to be {\bf flat} if the covariant derivative $d_L^\nabla$ is a coboundary map: $d_L^\nabla \circ d_L^\nabla =0$.

\begin{example}\label{ex:Bott}
Let $(L,A)$ be a Lie pair. 
The {\bf Bott connection} of $A$ on the quotient bundle $B =L/A$ is the flat connection 
$$
\nabla^{\Bott}: \Gamma(A) \times \Gamma(B) \to \Gamma(B)
$$ 
defined by 
$$
\nabla^{\Bott}_a(\pb(l)) = \pb([a,l]), \qquad \forall \,a \in \Gamma(A), \, l \in \Gamma(L),
$$
where $\pb:L \onto B=L/A$ is the canonical projection. 
\end{example}

Given an $L$-connection $\nabla$ on $B=L/A$, its \textbf{torsion} is the bundle map $T^\nabla: \Lambda^2 L \to B$ defined by 
$$
T^\nabla(l_1,l_2) = \nabla_{l_1} (\pb(l_2)) - \nabla_{l_2} (\pb(l_1)) - \pb([l_1,l_2]), \qquad \forall l_1,l_2 \in \Gamma(L).
$$
An $L$-connection on $B$ is called \textbf{torsion-free} if its torsion vanishes.  If an $L$-connection $\nabla$ on $B$ is torsion-free, then it extends the Bott connection, i.e.\ 
$$
\nabla_{\ia(a)} \big(\pb(l)\big) = \nabla^{\Bott}_a \big(\pb(l)\big), \qquad \forall\, a \in \Gamma(A), \, l \in \Gamma(L),
$$
where $\ia:A \to L$ is the inclusion map. See \cite[Lemma~5.2]{MR4271478}.

Let $L$ be a Lie algebroid over a manifold $M$, and $R$ denote the algebra of smooth functions on $M$. We denote by $\cU(L)$ the universal enveloping algebra of the Lie algebroid $L$. Essentially, $\cU(L)$ is the quotient of the reduced tensor algebra $\bigoplus_{n=1}^\infty \big(\big(R \oplus \Gamma(L)\big)^{\otimes_\KK n} \big)$ by the two-sided ideal generated by all the elements of the following types:
\begin{align*}
& l_1 \otimes l_2 - l_2 \otimes l_1 - [l_1,l_2],  & f \otimes l - f l, \\
& l \otimes f - f \otimes l - \rho_l(f), & f \otimes  g -fg,
\end{align*}
for $l,l_1,l_2 \in \Gamma(L)$ and $f,g \in R$. See \cite{MR0154906}.

The universal enveloping algebra $\cU(L)$ is a coalgebra over $R$. Its comultiplication $\Delta: \cU(L) \to \cU(L) \otimes_R \cU(L)$ is characterized by the equations:
\begin{gather*}
\Delta(1) = 1 \otimes 1; \\
\Delta(l) = 1 \otimes l + l \otimes 1, \quad \forall l \in \Gamma(L); \\
\Delta(u \cdot v) = \Delta(u) \cdot \Delta(v), \quad \forall u,v \in \cU(L),
\end{gather*}
where $1 \in R$ denotes the constant function on $M$ with value $1$. See \cite{MR1815717} for details. Explicitly, we have 
\begin{equation*}
\Delta(l_1 \cdot \cdots l_n) = 1 \otimes (l_1 \cdot  \cdots l_n) +  \sum_{\substack{ i+j= n\\ i,j \in \NN}} \sum_{\sigma \in S_i^j} (l_{\sigma(1)}  \cdots \cdot l_{\sigma(i)}) \otimes (l_{\sigma(i+1)}  \cdots \cdot l_{\sigma(n)}) 
+ (l_1 \cdot \cdots l_n) \otimes 1,
\end{equation*}
for any $l_1,  \cdots, l_n \in \Gamma(L)$.

Let $(L,A)$ be a Lie pair, and $\cU(L)\Gamma(A)$ be the left ideal of $\cU(L)$ generated by $\Gamma(A)$. It is easy to check that the quotient $\frac{\cU(L)}{\cU(L) \Gamma(A)}$ is automatically an $R$-coalgebra.

Similarly, the symmetric tensors $\Gamma(SB)$ of the quotient bundle $B=L/A$ is also equipped with a standard comultiplication $\Delta:\Gamma(SB) \to \Gamma(SB) \otimes_R \Gamma(SB)$, 
$$
\Delta(b_1 \odot \cdots \odot b_n) = 1 \otimes (b_1 \odot \cdots \odot b_n) + \sum_{\substack{ i+j= n\\ i,j \in \NN}} \sum_{\sigma \in S_i^j} (b_{\sigma(1)}  \odot \cdots \odot b_{\sigma(i)}) \otimes (b_{\sigma(i+1)}  \odot \cdots \odot b_{\sigma(n)}) 
+(b_1 \odot \cdots \odot b_n) \otimes 1,
$$
for any $b_1, \cdots, b_n \in \Gamma(B)$. 

The following proposition is an extension of the classical Poincar\'{e}--Birkhoff--Witt isomorphism to Lie pairs. 

\begin{proposition}[{\cite[Theorem~2.1]{MR4271478}}]\label{prop:PBW}
Let $(L,A)$ be a Lie pair, and $B=L/A$. Given a splitting $\ib:B \to L$ of the short exact sequence 
\begin{equation}\label{eq:splitting}
\begin{tikzcd}
0 \ar[r] &  A \ar[r, hook, "\ia"'] & \ar[l, bend right, dashed,"\pa"'] L \ar[r, two heads,"\pb"'] & \ar[l, bend right, dashed,"\ib"'] B \ar[r] & 0,
\end{tikzcd}
\end{equation}
and an $L$-connection $\nabla$ on $B$, there exists a unique isomorphism of $R$-coalgebras 
$$
\pbw: \Gamma(SB) \to \frac{\cU(L)}{\cU(L) \Gamma(A)}
$$
satisfying 
\begin{align}
\pbw(f) & = f, \\
\pbw(b) & = \ib(b), \\
\pbw(b^{\odot n+1}) & = \ib(b) \cdot \pbw(b^{\odot n}) - \pbw\big(\nabla_{\ib(b)}(b^{\odot n})\big) , \label{eq:PBWiteration}
\end{align}
for any $f \in R$, $b \in \Gamma(B)$, and $n \in \NN$.
\end{proposition}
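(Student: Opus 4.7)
The plan is to build $\pbw$ by induction on the symmetric degree $n$, treating uniqueness and existence separately, and leveraging the fact that in characteristic zero pure powers $b^{\odot n}$ linearly span $\Gamma(S^n B)$ via the polarization identity
\[
n!\,b_1 \odot \cdots \odot b_n = \sum_{I \subseteq \{1,\ldots,n\}}(-1)^{n-|I|}\Bigl(\sum_{i \in I} b_i\Bigr)^{\!\odot n}.
\]

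For uniqueness, the first two prescriptions fix $\pbw$ on $R = \Gamma(S^0 B)$ and on $\Gamma(B) = \Gamma(S^1 B)$; the recursion \eqref{eq:PBWiteration} then determines $\pbw(b^{\odot n})$ for every $b \in \Gamma(B)$ and every $n$; finally the polarization identity above, together with $R$-linearity of $\pbw$, propagates uniqueness to all of $\Gamma(SB)$. Notably, the coalgebra property is not needed at this stage.

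For existence I would not define $\pbw$ on pure powers and polarize (which would force a delicate well-definedness check), but instead directly define $\pbw$ on $\Gamma(S^{n+1}B)$ by the symmetric average
\[
\pbw(b_0 \odot \cdots \odot b_n) = \frac{1}{n+1}\sum_{i=0}^n \Bigl(\ib(b_i)\cdot \pbw(b_0 \odot \cdots \widehat{b_i} \cdots \odot b_n) - \pbw\bigl(\nabla_{\ib(b_i)}(b_0 \odot \cdots \widehat{b_i} \cdots \odot b_n)\bigr)\Bigr),
\]
then verify: (i) the formula is $R$-multilinear and descends to $\Gamma(S^{n+1}B)$; (ii) on pure powers it reduces to \eqref{eq:PBWiteration}; (iii) the map is a coalgebra morphism; (iv) it is bijective. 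Point (iii) is established by induction on $n$ using that each $\ib(b_i)$ is primitive in $\cU(L)$ (and each $b_i$ is primitive in $\Gamma(SB)$), together with the fact that $\nabla_{\ib(b)}$ is a coderivation of $\Gamma(SB)$ since it is induced from a connection on $B$. Point (iv) follows by observing that $\pbw$ strictly respects the natural ascending filtrations (by symmetric degree on the left, by image of $\cU^{\leq n}(L)$ on the right) and that the induced map on associated graded pieces is the classical Rinehart symmetrization isomorphism $\Gamma(S^n B) \xto{\cong} \operatorname{gr}^n \!\bigl(\cU(L)/\cU(L)\Gamma(A)\bigr)$; exhaustivity plus bijectivity on gradeds then yield bijectivity of $\pbw$.

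The main technical obstacle is point (i), the $R$-linearity in each slot of the symmetric formula. When one tries to move a scalar $f \in R$ past $\ib(b_i)$, the universal enveloping algebra relation $\ib(b_i) \cdot f = f\,\ib(b_i) + \rho(\ib(b_i))(f)$ introduces an anchor term, and simultaneously the Leibniz rule $\nabla_{\ib(b_i)}(f \cdot \alpha) = \rho(\ib(b_i))(f)\,\alpha + f\,\nabla_{\ib(b_i)}\alpha$ introduces another. The combinatorial averaging $\frac{1}{n+1}\sum_i$ is precisely what causes these anchor contributions to cancel term by term; verifying this cancellation carefully, and then deducing symmetry of the resulting expression in $b_0,\ldots,b_n$, constitutes the bulk of the work. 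Once that step is done, (ii)--(iv) are routine inductions.
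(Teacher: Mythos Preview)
The paper does not supply its own proof of this proposition; it is quoted verbatim from \cite[Theorem~2.1]{MR4271478}, and the only additional remark is that the recursion \eqref{eq:PBWiteration} is equivalent to the symmetric-average formula \eqref{eq:PBWVer2iteration}. Your proposal is essentially a sketch of the argument in that cited reference, organised around the same averaged recursion, so the overall strategy is the right one and matches what the paper invokes.

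One point deserves correction. You write that ``the combinatorial averaging $\tfrac{1}{n+1}\sum_i$ is precisely what causes these anchor contributions to cancel term by term.'' In fact the anchor terms already cancel inside each individual summand: if you rescale, say, $b_0$ to $f b_0$ and look at any fixed index $i\neq 0$, the extra $\rho(\ib(b_i))(f)\,\pbw(b_0\odot\cdots\widehat{b_i}\cdots)$ coming from $\ib(b_i)\cdot f$ is exactly offset by the extra $\rho(\ib(b_i))(f)\,\pbw(b_0\odot\cdots\widehat{b_i}\cdots)$ coming from the Leibniz rule for $\nabla_{\ib(b_i)}$ (using the inductive hypothesis that $\pbw$ is $R$-linear in lower degree). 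No averaging is needed for $R$-multilinearity. What the averaging \emph{is} needed for is well-definedness on the symmetric product: the un-averaged expression $\ib(b_0)\cdot\pbw(b_1\odot\cdots\odot b_n)-\pbw(\nabla_{\ib(b_0)}(b_1\odot\cdots\odot b_n))$ is not manifestly symmetric in $b_0,\dots,b_n$, and the symmetrisation forces this. So in your outline the genuine work in step~(i) is the symmetry, not the $R$-linearity; once you reallocate the emphasis there, the rest of your plan (primitivity for the coalgebra step, Rinehart's PBW on the associated graded for bijectivity) is correct and standard.
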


Equation~\eqref{eq:PBWiteration} is equivalent to the iteration equation 
\begin{equation}\label{eq:PBWVer2iteration}
\begin{split}
\pbw(b_0 \odot \cdots \odot b_n) = \frac{1}{n+1} \sum_{i=0}^n & \Big(\ib(b_i) \cdot \pbw(b_0 \odot \cdots \odot \widehat{b_i} \odot \cdots \odot b_n) \\
&\qquad  - \pbw\big(\nabla_{\ib(b_i)}(b_0 \odot \cdots \odot \widehat{b_i} \odot \cdots \odot b_n)\big) \Big)
\end{split}
\end{equation}
for any $b_0, \cdots, b_n \in \Gamma(B)$. 
Equation~\eqref{eq:PBWVer2iteration} indicates how one can compute the value of $\pbw$ at an arbitrary element. It also motivated the exploration of formal exponential maps for graded manifolds.  See \cite{MR3910470,MR4393962}.

When $L=T_M$ and $A$ is the trivial Lie subalgebroid of rank $0$, the $\pbw$ map is the inverse of `complete symbol map' which is an isomorphism from the space $\cU(T_M)$ of differential operators on $M$ to the space $\Gamma(S(T_M))$ of fiberwise polynomial functions on $T_M\dual$. 
The complete symbol map was generalized to arbitrary Lie algebroids by Nistor--Weinstein--Xu \cite{MR1687747}. It played an important role in quantization theory \cite{MR0706215,MR1231231,MR1427063,MR1687747}.

\subsection{Formal graded manifolds and Fedosov manifolds}

Fedosov manifolds are (formal) graded manifolds whose function algebra consists of fiberwise formal power series on $B=L/A$ with coefficients in $\Lambda^\bullet L\dual$. 
For us, a function on a graded manifold is required to be a polynomial in virtual coordinates. A graded manifold of Fedosov type will be referred to as a formal graded manifold.

Explicitly, a \textbf{($\ZZ$-)graded manifold} $\cM$ is a pair $(M,\cO_\cM)$, where $M$ is a smooth manifold, and $\cO_\cM$ is a sheaf of $\ZZ$-graded commutative $\cO_M$-algebras over $M$ such that there exist (i) a $\ZZ$-graded vector space $V$, (ii) an open cover of $M$, and (iii) an isomorphism of sheaves of graded $\cO_U$-algebras $\cO_\cM|_U \cong \cO_U \otimes SV\dual$ for every open set $U$ of the cover. We denote by $R=C^\infty(M)$ the algebra of smooth functions on the base manifold $M$, and by $\cR = C^\infty(\cM)$ the $R$-algebra of global sections of $\cO_\cM$.
See, for example, \cite{10.1093/imrn/rnae023,MR2819233,MR2709144}. 

By a \textbf{formal graded manifold} $\widehat{\cM}$, we mean a graded manifold $\cM$ together with a sheaf $\cO_\cI$ of proper homogeneous ideals of the sheaf $\cO_\cM$ of $\cO_M$-algebras. 
The {\bf function algebra} $\widehat{\cR} = C^\infty(\widehat{\cM})$ of $\widehat{\cM}$ is the $\cI$-adic completion of $\cR$, i.e.\ 
$$
\widehat{\cR} = C^\infty(\widehat{\cM}) = \varprojlim_k \cR \big/ \cI^k
$$
which is equipped with the $\cI$-adic topology. Here, $\cI$ denotes the global sections of $\cO_{\cI}$.

\begin{remark}
Since the ideal $\cI$ is required to be homogeneous, the \textbf{homogeneous elements} and the \textbf{degrees} in $\widehat{\cR}$ are automatically defined. Nevertheless, the algebra $\widehat{\cR}$ is \emph{not} necessarily a graded algebra since $\widehat{\cR}$ is not necessarily the direct sum of its homogeneous parts. Following is an example.
\end{remark}

\begin{example}
Let $V$ and $W$ be two graded vector spaces. We consider the formal graded manifold $V \oplus W_{\formal}$ arising from the ideal 
$$
\cI = S(V\dual) \otimes S^{\geq 1} (W\dual)
$$
of $C^\infty(V\oplus W) = S(V\dual) \otimes S(W\dual)$.  Its function algebra is 
$$
C^\infty(V \oplus W_{\formal}) = \varprojlim_k \frac{C^\infty(V\oplus W)}{\cI^k} =  \prod_{k=0}^\infty S (V\dual) \otimes S^{k}(W\dual) = S (V\dual)
\hotimes \widehat S(W\dual).
$$
Let $V=\RR$ and $W = \RR[2]$. 
Suppose $x$ is a nonzero vector in $V\dual$, and $y$ is a nonzero vector in $W\dual$ Then a function $f \in C^\infty(V \oplus W_{\formal})$ is of the form
$$
f = \sum_{k=0}^\infty (\sum_{i=0}^{n_k} a_i x^i) \otimes y^k,
$$
where $a_i \in \KK$. For example, one has the function $\sum_{k=0}^\infty 1 \otimes y^k$ on $V \oplus W_{\formal}$ which is \emph{not} a finite sum of homogeneous functions since each summand has a different degree: $|1 \otimes y^k| = 2k$.  
\end{example}

Let $(L,A)$ be a Lie pair, and $B=L/A$. The \textbf{Fedosov manifold} associated with $(L,A)$ is the formal graded manifold $L[1] \oplus B$ whose function algebra the $\cI$-adic completion of $\Gamma(S(L[1])\dual \otimes SB\dual) \cong \Gamma(\Lambda^\bullet L\dual \otimes SB\dual)$, i.e.\
$$
C^\infty(L[1] \oplus B) = \Gamma(\Lambda^\bullet L\dual \otimes \widehat{S}B\dual).
$$ 
Here, $\cI$ is the homogeneous ideal $\cI = \Gamma(\Lambda^\bullet L\dual \otimes S^{\geq 1} B\dual)$, and the subspace of homogeneous functions of degree $p$ in $C^\infty(L[1] \oplus B)$ is $\Gamma(\Lambda^p L\dual \otimes \widehat{S}B \dual)$.

On a formal graded manifold $\widehat{\cM}$, a vector field of degree $k$ is a derivation of degree $k$ of the function algebra $C^\infty(\widehat{\cM})$. A \textbf{(formal) dg manifold} $(\widehat\cM,Q)$ is a formal graded manifold $\widehat\cM$ endowed with a {\em homological vector field} $Q$, i.e.\ a vector field $Q$ of degree $+1$ satisfying $Q \circ Q =0$.

\subsection{Fedosov dg manifolds associated with a Lie pair}\label{sed:FedosovDGLiePair}

Let $(L,A)$ be a Lie pair over a manifold $M$, and $B=L/A$ the quotient vector bundle. 
Given a splitting $\ia \circ \pa + \ib \circ \pb = \id_L$ of the short exact sequence \eqref{eq:splitting} and a torsion-free $L$-connection $\nabla$ on $B$, one has a Fedosov dg manifold $(L[1] \oplus B, Q)$ as in \cite{MR4150934}. 

More explicitly, let $\eta^1, \cdots, \eta^r$ be a local frame of $B\dual$, and $\zeta^1, \cdots, \zeta^{r'}$ a local frame of $A\dual$. By the chosen splitting, we have an induced local frame of $L\dual$:
$$
\xi^i = \begin{cases}
\pb\dual(\eta^i) & \text{ if } i\leq r, \\
\pa\dual(\zeta^{i-r}) & \text{ if } i > r. 
\end{cases}
$$
These local frames, together with an underlying local coordinate system $x^1, \cdots, x^n$ on the base manifold $M$, induce a local coordinate system
\begin{equation}\label{eq:LocCoordFedosov}
x^1, \cdots, x^n, \xi^1, \cdots, \xi^{r+r'}, \eta^1, \cdots, \eta^r
\end{equation}
on the Fedosov manifold $\cM =L[1]\oplus B$. 
 
On $\cM$, one has the {\bf Koszul vector field} $\koszul$ which is locally of the form
\begin{equation}\label{eq:KoszulVF}
\koszul = \sum_{i=1}^r \xi^i \frac{\partial}{\partial \eta^i}.
\end{equation}
It is straightforward to show that the Koszul vector field is a globally defined vector field of degree $+1$. 
Similarly, one also has a vector field $\hat{\koszul}$ of degree $-1$ on $\cM$ which is locally of the form 
$$
\hat{\koszul} = \sum_{i=1}^r \eta^i \frac{\partial}{\partial \xi^i}.
$$
The vector field 
$$
\euler:= [\koszul,\hat\koszul] = \sum_{i=1}^r \eta^i \frac{\partial}{\partial \eta^i} + \xi^i \frac{\partial}{\partial \xi^i}
$$ 
is referred to as the {\bf Euler vector field}. As an operator on $\cR= C^\infty(\cM) = \Gamma(\Lambda^\bullet L\dual \otimes \widehat{S}B\dual)$, we have 
$$
\euler\Big( a_{I,J}\, \xi^I \eta^J  \Big) =   (|I_B|+|J|)a_{I,J}\, \xi^I \eta^J,  
$$
where $I_B$ consists of the first $r$ components of the multi-index $I$.  
By the chosen splitting, we have the isomorphism 
$$
\Lambda^p L\dual \cong \bigoplus_{p_1+p_2=p} \Lambda^{p_1} A\dual \otimes \Lambda^{p_2} B\dual.
$$ 
Let $\htp$ be the operator on $\cR $ defined by  
$$
\htp  \,\Big\vert_{\Gamma(\Lambda^{p_1} A\dual \otimes \Lambda^{p_2} B\dual \otimes S^q B\dual)} = \begin{cases}
0  & \text{ if } p_2=q=0, \\
 \dfrac{1}{p_2+q} \, \hat{\koszul} &  \text{ if } p_2+q >0.
\end{cases}
$$
This operator is a homotopy operator in the sense that 
\begin{equation}\label{eq:HtpEqForFunctions}
\koszul \htp + \htp \koszul = \id_\cR - \sigma_0,
\end{equation}
where $\sigma_0$ is the composition $\Gamma(\Lambda^\bullet L\dual \otimes \widehat{S}B\dual) \onto \Gamma(\Lambda^\bullet A\dual \otimes S^0 B\dual) \into \Gamma(\Lambda^\bullet L\dual \otimes \widehat{S}B\dual)$.
This homotopy operator extends to another operator $\htpd :=\htp \otimes \id_{B}:\Gamma(\Lambda^\bullet L\dual \otimes \widehat{S}B\dual \otimes B) \to \Gamma(\Lambda^\bullet L\dual \otimes \widehat{S}B\dual \otimes B)$ on the space of vertical vector fields.

\begin{proposition}[{\cite[Proposition~4.6]{MR4150934}}]\label{prop:FedosovDG}
Let $(L,A)$ be a Lie pair with quotient vector bundle $B = L/A$. Given a splitting of \eqref{eq:splitting} and a torsion-free $L$-connection $\nabla$ on $B$, there exists, on the formal graded manifold $L[1]\oplus B$, a unique vertical vector field 
$$
X^\nabla \in \Gamma(L\dual \otimes \widehat{S}^{\geq 2} B\dual \otimes B)
$$ 
satisfying $\htpd(X^\nabla) = 0$ and such that the vector field $Q$ of degree $+1$ on $L[1]\oplus B$ defined by
$$
Q = - \koszul + d_L^\nabla + X^\nabla.
$$
satisfies $Q^2 =0$.
\end{proposition}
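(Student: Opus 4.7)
The plan is to follow Fedosov's classical iteration scheme, adapted to the Lie pair setting. First, I would rewrite $Q^2 = 0$ as a Fedosov-type equation. Expanding $Q^2 = \tfrac{1}{2}[Q, Q]$ with $Q = -\koszul + d_L^\nabla + X^\nabla$ via the graded Jacobi identity and using $[\koszul, \koszul] = 0$, one obtains a condition of the schematic form
\[
[\koszul,\, X^\nabla] \;=\; \Omega \,+\, [d_L^\nabla,\, X^\nabla] \,+\, \tfrac{1}{2}[X^\nabla,\, X^\nabla],
\]
where $\Omega$ is an inhomogeneous term built from $-[\koszul, d_L^\nabla]$ and the curvature $\tfrac{1}{2}[d_L^\nabla, d_L^\nabla]$. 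Under the torsion-free hypothesis on $\nabla$, a local computation shows that $\Omega$ has $B\dual$-polynomial degree at least $1$, compatible with the requirement that $X^\nabla$ lie in $\Gamma(L\dual \otimes \widehat S^{\geq 2} B\dual \otimes B)$.

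Next, I would extend the homotopy equation \eqref{eq:HtpEqForFunctions} from functions to vertical vector fields. Tensoring $\htp$ with $\id_B$ yields an operator $\htpd$ on $\Gamma(\Lambda^\bullet L\dual \otimes \widehat S B\dual \otimes B)$ satisfying
\[
[\koszul,\, \htpd X] \,+\, \htpd[\koszul,\, X] \;=\; X \,-\, \sigma_0(X).
\]
Since $X^\nabla \in \widehat S^{\geq 2}B\dual \otimes B$ forces $\sigma_0(X^\nabla) = 0$, and the gauge condition imposes $\htpd(X^\nabla) = 0$, applying $\htpd$ to the Fedosov equation converts it into the fixed-point form
\[
X^\nabla \;=\; \htpd\bigl(\Omega \,+\, [d_L^\nabla,\, X^\nabla] \,+\, \tfrac{1}{2}[X^\nabla,\, X^\nabla]\bigr).
\]

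I would then solve this fixed-point equation by iteration on $B\dual$-polynomial degree. Decomposing $X^\nabla = \sum_{k \geq 2} X_k$, the key observation is that $\htpd$ raises the polynomial degree by exactly one, $[d_L^\nabla, -]$ preserves it, and $[X_i, X_j]$ has polynomial degree $i + j - 1$; consequently $X_k$ is determined explicitly by $\Omega$ and the lower-degree components $X_2, \dots, X_{k-1}$. This recursion converges in the $\cI$-adic topology to a unique element of $\Gamma(L\dual \otimes \widehat S^{\geq 2}B\dual \otimes B)$. For uniqueness, the difference $\Delta = X^\nabla - \tilde X^\nabla$ of any two solutions satisfies $\Delta = \htpd\bigl([d_L^\nabla, \Delta] + \tfrac{1}{2}[\Delta, X^\nabla + \tilde X^\nabla]\bigr)$, and the same degree induction (starting with $\Delta_2 = 0$, since the right-hand side has no degree-$2$ contribution) forces $\Delta = 0$ term by term.

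The main obstacle is to verify that the $X^\nabla$ produced by the iteration genuinely satisfies the Fedosov equation rather than merely its $\htpd$-image. Writing $E$ for the defect $[\koszul, X^\nabla] - \Omega - [d_L^\nabla, X^\nabla] - \tfrac{1}{2}[X^\nabla, X^\nabla]$, one checks from the construction that $\htpd(E) = 0$ and $\sigma_0(E) = 0$; by the homotopy identity it therefore suffices to prove $[\koszul, E] = 0$, which follows from a Bianchi-type identity established via the graded Jacobi identity together with $[\koszul, \koszul] = 0$, the defining properties of $\Omega$, and the Fedosov equation itself. A minor additional bookkeeping point is confirming that the iteration preserves $L\dual$-exterior degree equal to one, rather than drifting into higher exterior degree.
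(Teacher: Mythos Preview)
Your proposal is correct and is exactly the standard Fedosov iteration argument. The paper does not actually prove this proposition---it is quoted from \cite{MR4150934}---but immediately after stating it the paper records precisely the iteration you describe: with the torsion-free hypothesis giving $[\koszul,d_L^\nabla]=0$ and hence $\Omega=R^\nabla$, one has $X_2=\htpd(R^\nabla)$ and $X_{k+1}=\htpd\big([d_L^\nabla,X_k]+\sum_{p+q=k+1}X_p\circ X_q\big)$, which is your fixed-point scheme component by component.
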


The vector field $Q$ will be referred to as a {\bf Fedosov (homological) vector field}, and the formal dg manifold $(L[1] \oplus B, Q)$ is called a \textbf{Fedosov dg manifold}. 

Consider the decomposition $X^\nabla = \sum_{k=2}^\infty X_k$, where $X_k \in \Gamma(L\dual \otimes S^kB\dual \otimes B)$. The vector field $X^\nabla$ is determined by the iteration formula:
\begin{align*}
X_2 & = \htpd(R^\nabla), \\
X_{k+1} & = \htpd\Big( d_L^\nabla \circ X_k + X_k \circ d_L^\nabla + \sum_{\substack{p+q = k+1 \\ 2 \leq p,q \leq k-1}} X_p \circ X_q\Big), \qquad \forall \, k \geq 2.
\end{align*}
Here, $R^\nabla$ is the curvature of the $L$-connection $\nabla$, i.e.\ $d_L^\nabla \circ d_L^\nabla = R^\nabla$. 

There is another construction of the Fedosov homological vector field $Q$ in terms of the PBW map
$$
\pbw: \Gamma(SB) \to \frac{\cU(L)}{\cU(L) \Gamma(A)}
$$ 
from Proposition~\ref{prop:PBW}. Note that the $R$-coalgebra $\frac{\cU(L)}{\cU(L) \Gamma(A)}$ is endowed with a canonical infinitesimal $L$-action by coderivations. Pulling back this action through $\pbw$, one obtains a flat $L$-connection $\nabla^\lightning$ on $SB$:
$$
\nabla_l^\lightning(s) = \pbw\inv\big(l \cdot \pbw(s) \big),
$$
for any $l \in \Gamma(L)$ and $s \in \Gamma(SB)$. This $L$-connection $\nabla^\lightning$ induces an $L$-connection on the dual bundle $\widehat{S}B\dual$. We denote the corresponding covariant derivative by 
$$
d_L^{\nabla^\lightning}: \Gamma(\Lambda^\bullet L\dual \otimes \widehat{S}B\dual) \to \Gamma(\Lambda^{\bullet +1}L\dual \otimes \widehat{S}B\dual ).
$$

\begin{proposition}[{\cite[Theorem~4.7]{MR4150934}}]\label{prop:PBWconstructionOfFedosov}
Let $(L,A)$ be a Lie pair, and $B = L/A$. Given a splitting of \eqref{eq:splitting} and a torsion-free $L$-connection $\nabla$ on $B$, the Fedosov vector field $Q$ coincides with the covariant derivative $d_L^{\nabla^\lightning}$:
$$
Q = d_L^{\nabla^\lightning}.
$$
\end{proposition}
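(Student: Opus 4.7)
The strategy is to invoke the uniqueness clause of Proposition~\ref{prop:FedosovDG}: it suffices to verify that $d_L^{\nabla^\lightning}$ (i) squares to zero, (ii) admits a decomposition $-\koszul+d_L^\nabla+\tilde X$ with $\tilde X\in\Gamma(L\dual\otimes\widehat S^{\geq 2}B\dual\otimes B)$, and (iii) this $\tilde X$ satisfies the gauge condition $\htpd(\tilde X)=0$. Once the three bullets are established, uniqueness forces $d_L^{\nabla^\lightning}=Q$.

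Item (i) is essentially automatic from the definition. Since $l\mapsto\nabla^\lightning_l$ is the pullback, along the $R$-coalgebra isomorphism $\pbw$, of the canonical infinitesimal $L$-action on $\cU(L)/\cU(L)\Gamma(A)$ by coderivations, and since that action is a bona fide representation of the Lie algebroid $L$, the connection $\nabla^\lightning$ is flat; hence $(d_L^{\nabla^\lightning})^2=d_L^{R^{\nabla^\lightning}}=0$. For item (ii), I would first identify the leading parts of $\nabla^\lightning$ using the base cases $\pbw(f)=f$ and $\pbw(b)=\ib(b)$ of Proposition~\ref{prop:PBW}. For $a\in\Gamma(A)$, reducing in $\cU(L)/\cU(L)\Gamma(A)$ yields $\nabla^\lightning_{\ia(a)}(b)=\pb([\ia(a),\ib(b)])=\nabla^{\Bott}_a(b)$, which matches $\nabla_{\ia(a)}b$ because torsion-freeness forces $\nabla$ to extend the Bott connection. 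For $\ib(b)$, the iteration \eqref{eq:PBWiteration} rearranges to the model identity
$$
\nabla^\lightning_{\ib(b)}(b^{\odot n})=b^{\odot n+1}+\nabla_{\ib(b)}(b^{\odot n}).
$$
Dualizing to an action on $\Gamma(\widehat S B\dual)$, the term $b^{\odot n+1}$ contributes precisely the Koszul piece $-\koszul$ (it raises the symmetric degree by one, so its dual operator is $-\partial/\partial\eta^i$), while the $\nabla_{\ib(b)}$ term contributes the $B$-directional part of $d_L^\nabla$. All remaining corrections, which arise when $\nabla^\lightning$ is evaluated on mixed symmetric tensors $b_1\odot\cdots\odot b_n$ via the full symmetrized formula \eqref{eq:PBWVer2iteration}, are of $\widehat S B\dual$-shift at least $+1$ relative to $d_L^\nabla$, so the residual $\tilde X$ lies in $\Gamma(L\dual\otimes\widehat S^{\geq 2}B\dual\otimes B)$.

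The main obstacle is item (iii), the gauge condition. The crucial structural observation is that the symmetrization weight $\tfrac{1}{n+1}$ appearing in \eqref{eq:PBWVer2iteration} is precisely the normalization factor $\tfrac{1}{p_2+q}$ built into the homotopy $\htp$. By induction on the $\widehat S B\dual$-degree, I expect to translate \eqref{eq:PBWVer2iteration} into a recursive formula that expresses each homogeneous piece $\tilde X_k\in\Gamma(L\dual\otimes S^k B\dual\otimes B)$ as $\htpd$ applied to terms built from $d_L^\nabla$, $R^\nabla$, and lower-degree $\tilde X_j$, mirroring the iteration $X_{k+1}=\htpd(\cdots)$ of Proposition~\ref{prop:FedosovDG}. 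Since $\hat\koszul^2=0$ (because $\eta^i\eta^j$ is symmetric while $\partial_{\xi^i}\partial_{\xi^j}$ is antisymmetric) and hence $\htpd\circ\htpd=0$, every component produced this way lies in $\ker\htpd$, giving $\htpd(\tilde X)=0$. Combined with (i) and (ii), the uniqueness clause of Proposition~\ref{prop:FedosovDG} then identifies $d_L^{\nabla^\lightning}$ with $Q$.
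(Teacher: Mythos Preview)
The paper does not prove this proposition: it is quoted verbatim as \cite[Theorem~4.7]{MR4150934} and left unproved, with only a pointer to a graded-manifold analogue in another reference. There is therefore no proof in this paper to compare your attempt against.

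On the merits of your plan: the strategy of checking (i) flatness of $\nabla^\lightning$, (ii) the shape $-\koszul+d_L^\nabla+\tilde X$ with $\tilde X\in\Gamma(L\dual\otimes\widehat S^{\geq 2}B\dual\otimes B)$, and (iii) the gauge condition $\htpd(\tilde X)=0$, then invoking the uniqueness clause of Proposition~\ref{prop:FedosovDG}, is exactly the right architecture and is how the cited source proceeds. Your computations for (i) and for the leading terms in (ii) are correct; in particular the rearrangement of \eqref{eq:PBWiteration} into $\nabla^\lightning_{\ib(b)}(b^{\odot n})=b^{\odot n+1}+\nabla_{\ib(b)}(b^{\odot n})$ and its dual interpretation as $-\koszul+d_L^\nabla$ are on the mark.

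The only genuine gap is (iii). You correctly identify the heuristic---the weight $\tfrac{1}{n+1}$ in \eqref{eq:PBWVer2iteration} matches the $\tfrac{1}{p_2+q}$ in $\htp$---but you do not actually carry out the dualization of the symmetrized recursion and match it, degree by degree, against the Fedosov iteration $X_{k+1}=\htpd(\cdots)$. As written, ``I expect to translate \eqref{eq:PBWVer2iteration} into a recursive formula'' is an intention, not an argument. To close the gap you must show, by induction on $k$, that the $S^kB\dual$-component of $d_L^{\nabla^\lightning}+\koszul-d_L^\nabla$ is literally $\htpd$ applied to the appropriate combination of $R^\nabla$, $d_L^\nabla$, and lower $\tilde X_j$'s; this is where all the work in the cited proof lies.
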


One can find a graded-manifold version of Proposition~\ref{prop:PBWconstructionOfFedosov} in \cite{MR3910470}.

\section{Koszul-type homotopy equation for $\cI$-adic filtration-shifting vertical operators}

The main purpose of the present paper is to prove that, given two choices of a splitting of \eqref{eq:splitting} and a torsion-free $L$-connection on $B$, there exists a {\em unique} vertical isomorphism $\phi:(\cR,Q_2) \to (\cR,Q_1)$ between the Fedosov dg manifolds associated with the two choices (Theorem~\ref{thm:PhiIteration}). Here, $\cR = C^\infty(L[1]\oplus B)$ is the function algebra of a Fedosov manifold $L[1] \oplus B$, and $Q_1$ and $Q_2$ are the Fedosov vector fields associated with the two choices. Our method is based on an iteration equation for vertical isomorphisms of Fedosov dg manifolds:
\begin{equation}\label{eq:Sec2PhiIteration}
\phi =  1+ \htpd \big((Q_1-Q_2) \circ \phi\big)+ \htpd ([\koszul + Q_2 ,\phi]).
\end{equation}
The main technical difficulty here is the existence of a Koszul-type homotopy operator $\htpd$ for $\cI$-adic filtration-shifting vertical operators.

In this section, we introduce the operator $\htpd$ and prove that it satisfies a Koszul-type homotopy equation for any $\cI$-adic filtration-shifting vertical operators (Corollary~\ref{cor:KoszulBddVO}). The  proof of Equation~\eqref{eq:Sec2PhiIteration} is postponed to Section~\ref{sec:VerIsoFedosovDG}.

\subsection{Topology on the function algebra of a Fedosov manifold}\label{sec:I-adic}

Let $(L,A)$ be a Lie pair, and $B =L/A$. The Fedosov manifold $L[1]\oplus B$ has the function algebra 
$$
\cR = \Gamma(\Lambda^\bullet L\dual \otimes \widehat{S} B\dual) = \prod_{k=0}^\infty \Gamma(\Lambda^\bullet L\dual \otimes S^k B\dual).
$$ 
Let $\cI=\Gamma(\Lambda^\bullet L\dual \otimes \widehat{S}^{\geq 1} B\dual)$. The \textbf{$\cI$-adic metric} $d_\cI:\cR \times \cR \to \RR$ is defined by 
$$
d_\cI(f,g) = \begin{cases}
\dfrac{1}{k+1} & \text{ if } f-g \in \cI^k \setminus \cI^{k+1}, \\
0 & \text{ if } f-g \in \cI^k, \; \forall k \geq 0.
\end{cases}
$$
Here we define $\cI^k$, for $k \leq 0$, to be $\cR$ for convenience. 
It is straightforward to show that the function algebra $\cR$ together with the metric $d_\cI$ is a complete metric space. Furthermore, $\cR$ is a topological ring whose topology is generated by the following neighborhoods of zero: $\cI^k = \Gamma(\Lambda^\bullet L\dual \otimes \widehat{S}^{\geq k} B\dual)$, $k\geq 0$. See, for example, \cite{MR0282956}.

The following lemma is a criterion of continuity.

\begin{lemma}\label{lem:ContiCriterion}
Let $\phi:{\cR} \to {\cR}$ be a linear map. 
If there exists a fixed integer $N \in \ZZ$ such that $\phi(\cI^k) \subset \cI^{k+N}$ for any $k \geq 0$, then $\phi$ is continuous.
\end{lemma}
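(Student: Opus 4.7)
The plan is to reduce to continuity at the origin and then to exhibit, for each basic neighborhood $\cI^k$ of $0$ in the target, a basic neighborhood $\cI^m$ of $0$ in the source such that $\phi(\cI^m)\subset\cI^k$. Since $\cR$ is an abelian topological group under addition and $\phi$ is $\KK$-linear, continuity on all of $\cR$ is equivalent to continuity at $0$. Because the collection $\{\cI^k\}_{k\geq 0}$ forms a fundamental system of neighborhoods of $0$ for the $\cI$-adic topology (as recalled in Section~\ref{sec:I-adic}), it suffices to find, for every $k\geq 0$, an integer $m\geq 0$ with $\phi(\cI^m)\subset\cI^k$.

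Given the hypothesis $\phi(\cI^j)\subset\cI^{j+N}$ for every $j\geq 0$, I would simply take $m = \max(0,k-N)$. Then $m+N\geq k$, so by the convention $\cI^{j}\subset\cI^{j'}$ whenever $j\geq j'$ (with $\cI^{j}=\cR$ for $j\leq 0$), one has
\[
\phi(\cI^m)\subset\cI^{m+N}\subset\cI^k,
\]
which establishes continuity at $0$. Translating by any fixed $f\in\cR$ via $\phi(f+g)=\phi(f)+\phi(g)$ gives continuity of $\phi$ everywhere.

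This is essentially bookkeeping, and the only mild subtlety is the sign of $N$: when $N\geq 0$ one may take $m=k$ and the filtration estimate is automatic, whereas when $N<0$ one must enlarge the source neighborhood by the shift $|N|$, which is precisely what the choice $m=k-N$ accomplishes. No deeper tool is needed beyond the very definition of the $\cI$-adic topology, so I do not anticipate any real obstacle.
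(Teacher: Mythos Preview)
Your argument is correct and is essentially the same as the paper's: both hinge on the inclusion $\cI^{\,k-N}\subset\phi^{-1}(\cI^k)$ (equivalently $\phi(\cI^{\max(0,k-N)})\subset\cI^k$), with the paper phrasing this as ``$\phi^{-1}(\cI^k)$ is a union of cosets of $\cI^{k-N}$'' and you phrasing it as ``continuity at $0$ in a topological group.'' No substantive difference.
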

\begin{proof}
Since $\cI^k$, $k \geq 0$, generate the topology of $\cR$, it suffices to show that $\phi\inv(\cI^k)$ is open for each $k$. It follows from the assumption that $\cI^{k-N} \subset \phi\inv(\cI^k)$. Thus, $\phi\inv(\cI^k) = \bigcup_{f \in \phi\inv(\cI^k)} (f + \cI^{k-N})$ is open. This proves the lemma.
\end{proof}

\begin{remark}\label{rmk:Conv&Conti}
Here is a trio of useful observations:
\begin{enumerate}
\item
Convergence criterion for series in ${\cR}$: If $a_k \in \cI^k$ for all $k \geq 0$, then the series $\sum_{k=0}^\infty a_k$ converges in ${\cR}$.

\item
If $\phi:{\cR}\to{\cR}$ is a continuous linear map and $\sum_{k=0}^\infty f_k$ is a series in ${\cR}$ with $f_k\in\Gamma(\Lambda^\bullet L^\vee\otimes S^k B^\vee)$ for all $k$, then the series $\sum_{k=0}^\infty \phi(f_k)$ converges to $\phi(\sum_{k=0}^\infty f_k)$ in ${\cR}$. Thus, a continuous linear map $\phi:{\cR}\to{\cR}$ is uniquely determined by its restrictions to the subspaces $\Gamma(\Lambda^\bullet L^\vee\otimes S^k B^\vee)$ for $k\geq0$.

\item
Convergence criterion for series of operators on ${\cR}$: If $\phi_n:\cR \to \cR$, $n \geq 0$, is a sequence of operators such that $\phi_n(\cI^k) \subset \cI^{n+k}$, for all $k\geq 0$ and $n\geq 0$, then the series $\sum_{n=0}^\infty \phi_n$ converges uniformly to a continuous linear operator $\phi:\cR \to \cR$.
\end{enumerate}
\end{remark}

\begin{example}\label{ex:VerticalDiffOp}
An element $D \in \Gamma(\Lambda^\bullet L\dual \otimes \widehat{S} B\dual \otimes SB)$ can be identified with a linear map $D:\cR \to \cR$ in the following way: 
if 
$$
D = \omega \otimes f \otimes (b_1 \odot \cdots \odot b_q) \in \Gamma(\Lambda^\bullet L\dual \otimes \widehat{S} B\dual \otimes S^q B),  
$$
then 
\begin{equation}\label{eq:VerDiffOpDef}
D(\omega' \otimes f') = (\omega \wedge \omega') \otimes (f \cdot \iota_{b_1} \cdots \iota_{b_q}(f')),
\end{equation}
where $\omega, \omega' \in \Gamma(\Lambda^\bullet L\dual)$, $f, f' \in \Gamma(\widehat{S}B\dual)$, and $b_1, \cdots, b_k \in \Gamma(B)$. For $b \in \Gamma(B)$, the {\bf contraction} $\iota_b:\cR \to \cR$ is the $\Gamma(\Lambda^\bullet L\dual)$-linear continuous derivation of degree zero satisfying $\iota_b(1 \otimes \eta) = \pair{b}{\eta}$ for any $\eta \in \Gamma(B\dual)$. 

Observe that if $D \in \Gamma(\Lambda^\bullet L\dual \otimes \widehat{S} B\dual \otimes S^q B)$, then $D(\cI^k)\subset\cI^{k-q}$ for all $k$. Thus, by Lemma~\ref{lem:ContiCriterion}, every element of $\Gamma(\Lambda^\bullet L\dual \otimes \widehat{S} B\dual \otimes SB)$ is a continuous linear operator on $\cR$.
\end{example}

\subsection{Vertical differential operators}

In Example~\ref{ex:VerticalDiffOp}, we introduced an important class of continuous linear operators on $\cR$ which will be called {\em vertical differential operators}, meaning formal fiberwise differential operators on the vector bundle $ L[1]\oplus B \to L[1]$ tangent to the fiber-directions. Following is a theoretical definition of vertical differential operators. 

\begin{definition}
Let $(L,A)$ be a Lie pair, and $\cM = L[1] \oplus B$ be the associated Fedosov manifold. 
A \textbf{vertical vector field} on $\cM$ is a continuous derivation of the function algebra $\cR = \Gamma(\Lambda^\bullet L\dual \otimes \widehat{S}B\dual)$ which is linear over $\Gamma(\Lambda^\bullet L\dual)$. Let $\vdop$ denote the subalgebra of $\End_{\KK}(\cR)$ generated by the vertical vector fields and the multiplications by function $f \in \cR$. An element in $\vdop$ is called a \textbf{vertical differential operator} on $\cM$.
\end{definition}

Let $X$ be a vertical vector field on $\cM$. Due to the continuity and $\Gamma(\Lambda^\bullet L\dual)$-linearity of $X$, it is uniquely determined by its restrictions to $\Gamma(S^k B\dual) = \Gamma(\Lambda^0 L\dual \otimes S^k B\dual)$, $k \geq 0$. Furthermore, each restriction is determined by the derivation law and the restriction to $k=1$. Consequently, the space of vertical vector fields is isomorphic to $\Gamma(\Lambda^\bullet L\dual \otimes \widehat{S}B\dual \otimes B)$ by the formula \eqref{eq:VerDiffOpDef} with $q =1$. 
Since $\Gamma(\Lambda^\bullet L\dual \otimes \widehat{S}B\dual \otimes SB)$ is generated by $\Gamma(\Lambda^\bullet L\dual \otimes \widehat{S}B\dual \otimes B)$ and $\cR = \Gamma(\Lambda^\bullet L\dual \otimes \widehat{S}B\dual )$, it follows that Equation~\eqref{eq:VerDiffOpDef} defines an isomorphism of graded algebras
\begin{equation}\label{eq:vdop}
\vdop \cong \Gamma(\Lambda^\bullet L\dual \otimes \widehat{S}B\dual \otimes SB)
\end{equation}
which is also a graded Lie algebra with respect to the bracket $[\argument,\argument]$ of graded commutators.

Let $\koszul$ be the Koszul vector field defined as in \eqref{eq:KoszulVF}. Since $\koszul$ is a degree-one vertical vector field with $[\koszul,\koszul]=0$, the operator
$$
\delta:=[\koszul,\argument]:\vdop \to \vdop
$$
is a coboundary operator: $\delta^2 =0$.

\begin{lemma}
With the identification \eqref{eq:vdop}, the operator $\delta$ coincides with $\koszul \otimes \id_{SB}$:
$$
\delta = \koszul \otimes \id_{SB}: \Gamma(\Lambda^\bullet L\dual \otimes \widehat{S} B\dual \otimes SB) \to  \Gamma(\Lambda^\bullet L\dual \otimes \widehat{S} B\dual \otimes SB).
$$
\end{lemma}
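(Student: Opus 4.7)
The plan is to verify the equality of these two $\KK$-linear operators on $\Gamma(\Lambda^\bullet L\dual\otimes \widehat S B\dual\otimes SB)$ by checking it on decomposable elements $D = \omega \otimes f \otimes (b_1 \odot \cdots \odot b_q)$ with $\omega \in \Gamma(\Lambda^\bullet L\dual)$, $f \in \Gamma(\widehat S B\dual)$, $b_i \in \Gamma(B)$. Under the identification \eqref{eq:vdop}, $D$ acts on $g \in \cR$ by $D(g) = (\omega\otimes f)\cdot \iota_{b_1}\cdots \iota_{b_q}(g)$, and $|D| = |\omega|$. The computation of $\delta(D) = [\koszul,D]$ therefore reduces to understanding how the Koszul vector field interacts with (i) left multiplication by an element of $\cR$, and (ii) the contractions $\iota_b$.

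Part (i) is immediate from the fact that $\koszul$ is a graded derivation of $\cR$: the graded commutator of $\koszul$ with the multiplication operator $(\omega\otimes f)\cdot$ equals multiplication by $\koszul(\omega\otimes f)$. The heart of the argument is part (ii), where I would prove the key identity $[\koszul,\iota_b] = 0$ for every $b\in\Gamma(B)$. Since $b\mapsto \iota_b$ is $C^\infty(M)$-linear, this is a local check in a frame $e_1,\dots,e_r$ of $B$ dual to $\eta^1,\dots,\eta^r$. In such a frame, $\iota_{e_i}$ and $\partial/\partial\eta^i$ agree on the algebra generators of $\cR$, and being both $\Gamma(\Lambda^\bullet L\dual)$-linear continuous derivations of $\cR$, they must coincide. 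Because the $\eta^i$ are of degree zero, $\partial/\partial\eta^i$ is an even derivation, and it manifestly commutes with $\koszul = \sum_j \xi^j\,\partial/\partial\eta^j$: the derivation $\partial/\partial\eta^i$ annihilates each coefficient $\xi^j$ and commutes with each $\partial/\partial\eta^j$.

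Given this key lemma, a straightforward induction using the Leibniz rule for graded commutators yields $[\koszul,\iota_{b_1}\cdots\iota_{b_q}] = 0$. Combining (i) and (ii), the expansion
\[
\delta(D)(g) \;=\; \koszul\bigl(D(g)\bigr) - (-1)^{|D|}\, D(\koszul g)
\]
simplifies: after distributing $\koszul$ across the product $(\omega\otimes f)\cdot \iota_{b_1}\cdots\iota_{b_q}(g)$, the term where $\koszul$ passes through to hit the tail cancels the contribution $(-1)^{|D|} D(\koszul g)$, leaving only $\koszul(\omega\otimes f)\cdot \iota_{b_1}\cdots\iota_{b_q}(g)$. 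This is exactly the operator associated to $\koszul(\omega\otimes f)\otimes (b_1\odot\cdots\odot b_q) = (\koszul \otimes \id_{SB})(D)$, completing the identification.

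The main obstacle is the key commutation $[\koszul,\iota_b]=0$; once it is in hand, the rest is bookkeeping with signs and the graded Leibniz rule. The only subtlety I would flag is making the identification $\iota_{e_i} = \partial/\partial\eta^i$ rigorous, via uniqueness of continuous $\Gamma(\Lambda^\bullet L\dual)$-linear derivations of $\cR$ prescribed on generators, so that the purely local computation translates into the claimed global identity of operators.
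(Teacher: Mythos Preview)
Your proposal is correct and follows essentially the same approach as the paper. Both reduce the claim to the key local commutation $[\koszul,\partial/\partial\eta^i]=0$ and then use the graded Leibniz rule; the paper states the reduction ``it suffices to show $\delta(1\otimes 1\otimes s)=0$'' without elaboration, whereas you spell out explicitly how the commutator with the multiplication operator and with the contractions $\iota_{b_j}$ separate, but the substance is identical.
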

\begin{proof}
It suffices to show that $\delta(1 \otimes 1 \otimes s)=0$ for any $s \in \Gamma(SB)$. 
Following the notations of local coordinates in \eqref{eq:LocCoordFedosov}, 
$$
\delta\Big(\frac{\partial}{\partial \eta^{i_1}} \cdot \cdots \frac{\partial}{\partial \eta^{i_q}}\Big) = \sum_{j=1}^q  \frac{\partial}{\partial \eta^{i_1}} \cdots \Big[\koszul, \frac{\partial}{\partial \eta^{i_j}}\Big] \cdots \frac{\partial}{\partial \eta^{i_n}}
$$
which vanishes since 
$$
\Big[\koszul, \frac{\partial}{\partial \eta^{i_j}}\Big] = -\sum_{k=1}^{\rk B} \frac{\partial}{\partial \eta^{i_j}}(\xi^k)\frac{\partial}{\partial \eta^{k}} =0. 
$$
This completes the proof.
\end{proof}

Let
$$
\htpd:=\htp \otimes \id_{SB}: \Gamma(\Lambda^\bullet L\dual \otimes \widehat{S} B\dual \otimes SB) \to \Gamma(\Lambda^\bullet L\dual \otimes \widehat{S} B\dual \otimes SB).$$ It follows from Equation~\eqref{eq:HtpEqForFunctions} that, for any $D \in  \vdop \cong \Gamma(\Lambda^\bullet L\dual \otimes \widehat{S} B\dual \otimes SB)$, 
\begin{equation}\label{eq:HtpEqForDiffOp}
\delta \htpd (D)+\htpd\delta(D) = D - \sigma_0(D),
\end{equation}
where $\sigma_0$ is the projection operator
\begin{multline*}
\sigma_0:\Gamma(\Lambda^\bullet L\dual \otimes \widehat{S} B\dual \otimes SB)\cong \Gamma(\Lambda^\bullet A\dual \otimes \Lambda^\bullet B\dual \otimes \widehat{S} B\dual \otimes SB)  \\
\onto \Gamma(\Lambda^\bullet A\dual \otimes \Lambda^0 B\dual \otimes  S^0 B\dual \otimes SB)\cong \Gamma(\Lambda^\bullet A\dual  \otimes SB)  \into \Gamma(\Lambda^\bullet L\dual \otimes \widehat{S} B\dual \otimes SB).
\end{multline*}

The following lemma is straightforward.

\begin{lemma}\label{lem:HomotopyEq}
As operators on $\vdop$, 
\begin{gather*}
\delta^2=0, \qquad \htpd^2=0, \qquad \htpd \delta \htpd = \htpd,\\
[\delta,\htpd] = \delta \htpd + \htpd \delta = \id -\sigma_0.
\end{gather*}
\end{lemma}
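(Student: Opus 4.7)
The plan is to reduce every identity in the lemma to a corresponding identity on the function algebra $\cR$, exploiting the isomorphism $\vdop \cong \Gamma(\Lambda^\bullet L\dual \otimes \widehat{S}B\dual \otimes SB)$ already established. Under this isomorphism one has $\delta = \koszul \otimes \id_{SB}$ (proven just before the lemma) and $\htpd = \htp \otimes \id_{SB}$ (by definition), so any identity built from $\delta$ and $\htpd$ alone is equivalent to the corresponding identity between $\koszul$ and $\htp$ acting on $\cR$. The identity $\delta^2=0$ is already recorded in the preceding paragraph as a consequence of $[\koszul,\koszul]=0$.

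Next I would verify $\htpd^2=0$, which reduces to $\htp^2 = 0$. Using the piecewise definition of $\htp$, the operator $\hat{\koszul} = \sum_{i=1}^r \eta^i \partial/\partial \xi^i$ sends a section of $\Gamma(\Lambda^{p_1}A\dual \otimes \Lambda^{p_2}B\dual \otimes S^qB\dual)$ into $\Gamma(\Lambda^{p_1}A\dual \otimes \Lambda^{p_2-1}B\dual \otimes S^{q+1}B\dual)$, so the quantity $p_2+q$ is preserved; thus the two scalar prefactors coming from consecutive applications of $\htp$ both equal $\frac{1}{p_2+q}$, and it remains to show $\hat{\koszul}^2 = 0$. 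This is a one-line local calculation: $\hat{\koszul}^2 = \sum_{i,j} \eta^i \eta^j \, \partial_{\xi^i}\partial_{\xi^j}$ is symmetric in the degree-zero coefficients $\eta^i\eta^j$ but antisymmetric in the odd partials $\partial_{\xi^i}\partial_{\xi^j}$, hence vanishes.

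The identity $[\delta,\htpd] = \delta\htpd + \htpd\delta = \id - \sigma_0$ is exactly Equation~\eqref{eq:HtpEqForDiffOp}, obtained directly by tensoring \eqref{eq:HtpEqForFunctions} with $\id_{SB}$; the signs in the graded commutator match because $|\delta|=+1$ and $|\htpd|=-1$. Finally, to get $\htpd\delta\htpd = \htpd$, I would pre-compose the homotopy equation with $\htpd$ to obtain
\[
\htpd\delta\htpd + \htpd^2\delta = \htpd - \htpd\sigma_0.
\]
The term $\htpd^2\delta$ vanishes by the previous step, and $\htpd\sigma_0 = 0$ because the image of $\sigma_0$ lies in the subspace where $p_2 = q = 0$, on which $\htpd$ is defined to be zero.

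There is no real obstacle: the lemma amounts to packaging facts that are essentially immediate from the definitions and the scalar homotopy identity \eqref{eq:HtpEqForFunctions}, with the only genuine computation being the antisymmetry argument showing $\hat{\koszul}^2=0$.
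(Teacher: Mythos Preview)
Your proof is correct and fills in exactly the details the paper omits: the paper simply declares the lemma ``straightforward'' and gives no argument, so your reduction to the function-level identities via $\delta=\koszul\otimes\id_{SB}$ and $\htpd=\htp\otimes\id_{SB}$, together with the antisymmetry check for $\hat\koszul^2=0$ and the composition trick for $\htpd\delta\htpd=\htpd$, is precisely the intended verification.
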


Lemma~\ref{lem:HomotopyEq} guarantees the existence of a contraction with differential $\delta$ and homotopy operator $\htpd$. See \cite[Appendix~A]{MR4665716}. 
This contraction is referred to as the {\em Koszul contraction for vertical differential operators}.

\subsection{$\cI$-adic filtration-shifting vertical operators}\label{sec:BddBelowVerOp}

Now we consider a wider class of continuous linear operators on $\cR=C^\infty(L[1] \oplus B) = \Gamma(\Lambda^\bullet L\dual \otimes \widehat{S} B\dual)$. 

\begin{definition}
A continuous linear map $\phi:\cR \to \cR$ is called a \textbf{vertical operator} on $\cR$ if it is  $\Gamma(\Lambda^\bullet L\dual)$-linear. Let $N$ be an integer. A vertical operator $\phi:\cR \to \cR$ is said to be an \textbf{$N$-shifting} operator (or \textbf{shift the $\cI$-adic filtration by $N$}) if  
\begin{equation}\label{eq:DegPropertyEndo}
\phi\big(\Gamma(\Lambda^\bullet L\dual \otimes S^k B\dual)\big) \subset \Gamma(\Lambda^\bullet L\dual \otimes \widehat{S}^{\geq k+N} B\dual), \qquad \forall \, k \geq 0.
\end{equation}
We denote by $\sE^{\geq N}$ the space of $N$-shifting vertical operators. The algebra of {\bf $\cI$-adic filtration-shifting} vertical operators on $\cR$ is 
$$
\sE = \bigcup_{N \in \ZZ} \sE^{\geq N}.
$$
\end{definition}

\begin{example}
Let $q \in \NN$ and $N \in \ZZ$. 
A vertical differential operator in $\Gamma(\Lambda^\bullet L\dual \otimes \widehat{S}^{\geq q+N} B\dual \otimes S^qB)$ is an $N$-shifting operator. 
\end{example}

In fact, any $\cI$-adic filtration-shifting vertical operator is a uniform limit of vertical differential operators. 

\begin{lemma}\label{lem:UniformConvDq}
Fix an integer $N$. Given any sequence of vertical differential operators $D_q \in \Gamma(\Lambda^\bullet L\dual \otimes \widehat{S}^{\geq q+N} B\dual \otimes S^q B)$, $q\geq0$, on $\cM=L[1]\oplus B$, the series $\sum_{q=0}^\infty D_q$ converges uniformly to an $N$-shifting vertical operator.
\end{lemma}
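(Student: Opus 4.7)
The plan is to analyze how each $D_q$ shifts the $\cI$-adic filtration of $\cR$ and then invoke the completeness criteria collected in Remark~\ref{rmk:Conv&Conti}. The key observation is that for $D_q \in \Gamma(\Lambda^\bullet L\dual \otimes \widehat{S}^{\geq q+N} B\dual \otimes S^q B)$, the defining formula \eqref{eq:VerDiffOpDef} consists of $q$ contractions $\iota_{b_1}\cdots\iota_{b_q}$, which annihilate $\Gamma(\Lambda^\bullet L\dual \otimes S^{<q}B\dual)$ and otherwise lower the $\widehat{S}B\dual$-degree by exactly $q$, followed by multiplication by an element of $\widehat{S}B\dual$-degree at least $q+N$. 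Consequently, $D_q$ sends $\Gamma(\Lambda^\bullet L\dual \otimes S^k B\dual)$ into $\Gamma(\Lambda^\bullet L\dual \otimes \widehat{S}^{\geq k+N}B\dual)$ whenever $k \geq q$, and to zero when $k < q$.

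From this I would deduce the stronger, input-independent estimate $D_q(\cR) \subset \cI^{q+N}$. Indeed, expanding an arbitrary $f \in \cR$ as $f = \sum_{k\geq 0} f_k$ with $f_k \in \Gamma(\Lambda^\bullet L\dual \otimes S^k B\dual)$ and using the continuity of the vertical differential operator $D_q$ observed in Example~\ref{ex:VerticalDiffOp}, one obtains $D_q(f) = \sum_{k \geq q} D_q(f_k)$, each term of which lies in $\cI^{k+N} \subset \cI^{q+N}$. Because $\cR$ is $\cI$-adically complete, it follows that for each $f$ the sequence $(D_q(f))_q$ tends to zero and the partial sums $\sum_{q=0}^{n} D_q(f)$ form a Cauchy sequence, allowing me to define
\[
\phi(f) := \sum_{q=0}^{\infty} D_q(f).
\]

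The operator $\phi$ inherits $\KK$-linearity and $\Gamma(\Lambda^\bullet L\dual)$-linearity term-by-term. For the $N$-shifting property, I would note that for $f \in \Gamma(\Lambda^\bullet L\dual \otimes S^k B\dual)$ only finitely many terms contribute: $\phi(f) = \sum_{q=0}^{k} D_q(f) \in \Gamma(\Lambda^\bullet L\dual \otimes \widehat{S}^{\geq k+N}B\dual)$. Continuity is then immediate from Lemma~\ref{lem:ContiCriterion}. The uniformity of convergence is essentially a corollary of the sharper estimate $D_q(\cR) \subset \cI^{q+N}$: the tail operator $\sum_{q\geq n} D_q$ maps \emph{all of} $\cR$ into $\cI^{n+N}$, independently of the input. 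I do not expect any genuine obstacle here; the only thing to keep straight is the distinction between the two inclusions used above (vanishing for $k<q$ versus shifting by $N$ for $k\geq q$), since neither alone yields the sharper bound $D_q(\cR) \subset \cI^{q+N}$ that simultaneously drives convergence and its uniformity.
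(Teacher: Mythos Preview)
Your proof is correct and follows essentially the same approach as the paper: both arguments hinge on the two observations that $D_q$ annihilates $\Gamma(\Lambda^\bullet L\dual\otimes S^{<q}B\dual)$ and shifts the filtration by $N$ on the complement, combine them to obtain the input-independent bound $\im(D_q)\subset\cI^{q+N}$, and conclude that the tail $\sum_{q\geq n}D_q$ lands in $\cI^{n+N}$, which is precisely uniform convergence. The only cosmetic difference is that the paper records these facts as a chain of displayed inclusions, while you package them into a single narrative; the content is the same.
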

\begin{proof} 
Consider the algebra $\whole$ with the $\cI$-adic topology generated by its ideal $\cI=\tail{1}$.

Let $D_0, D_1, D_2, \cdots$ be a sequence of ($N$-shifting) vertical differential operators on the algebra $\whole$ with
\[ D_q\in\Gamma(\Lambda^\bullet L\dual\otimes \widehat{S}^{\geq q+N}B\dual\otimes S^q B),
\qquad\forall q\in \NN_0 .\]
Note that
\begin{equation}\label{one}
D_q\big(\tail{n}\big)\subset\tail{n+N},
\qquad\forall q\in \NN_0
.\end{equation}
In other words, each operator $D_q$ preserves the $\cI$-adic filtration
on $\whole$ up to an $N$-shift.

{We claim that the series of operators $\sum_{q=0}^\infty D_q$
converges uniformly.}

Recall that $D_q(f_k)=0$ if $f_k\in\homogeneous{k}$ and $q>k$.
In other words, we have
\begin{equation}\label{two}
\head{q}\subset\ker(D_q)
.\end{equation}

It follows from \eqref{two} and \eqref{one} that
\[ \im(D_q)=D_q\big(\tail{q}\big)\subset\tail{q+N},
\qquad \forall q\in\NN_0 .\]

Therefore the series $\sum_{q=0}^\infty D_q$ converges, for only finitely many of the $D_q$ contribute to any one of the homogeneous components of $\whole$: given any $l\in\NN_0$, we have $\pr_{\homogeneous{l}}\circ D_q=0$ for all but finitely many values of the index $q$.

Furthermore, it follows from \eqref{two} that
\[ \head{n}\subset\head{n+r}\subset\ker(D_{n+r}),
\qquad \forall r\in\NN_0 ,\]
and, consequently,
\begin{equation}\label{three}
\head{n}\subset\bigcap_{q=n}^\infty\ker(D_q)\subset\ker\big(\sum_{q=n}^\infty D_q\big)
.\end{equation}
On the other hand, it follows from \eqref{one} that
\begin{equation}\label{four}
\sum_{q=n}^\infty D_q\big(\tail{n})\subset\tail{n+N}
.\end{equation}

It follows from \eqref{three} and \eqref{four} that
\[ \im\big(\sum_{q=n}^\infty D_q\big)
=\sum_{q=n}^\infty D_q\big(\tail{n}\big)
\subset\tail{n+N} ,\]
i.e.\ 
\begin{equation}\label{five}
\im\big(\sum_{q=n}^\infty D_q\big)
\subset\cI^{n+N}
.\end{equation}

Since the $n$-tail $\sum_{q=n}^\infty D_q$ of the series
takes values in the $(n+N)$-th power
of the ideal generating the $\cI$-adic topology,
the series of operators $\sum_{q=0}^\infty D_q$
converges uniformly.
\end{proof}

\begin{proposition}\label{prop:BddOp&DiffOp}
Let $N$ be an integer. A vertical operator $\phi$ on $\cR$ shifts the $\cI$-adic filtration by $N$ if and only if there exists a unique sequence of vertical differential operators $D_q \in \Gamma(\Lambda^\bullet L\dual \otimes \widehat{S}^{\geq q+N} B\dual \otimes S^q B)$, $q \geq 0$, such that the series $\sum_{q=0}^\infty D_q$ converges uniformly to $\phi$.
\end{proposition}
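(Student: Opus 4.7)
The forward implication is exactly Lemma~\ref{lem:UniformConvDq}, so the content lies in the converse: given an $N$-shifting vertical operator $\phi$, I would construct the unique sequence $\{D_q\}$ by peeling off one symmetric order at a time via a tensor--Hom identification.

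\textbf{Step 1 (Inductive extraction).} Set $\phi_0:=\phi$. Suppose inductively that $\phi_q$ is an $N$-shifting vertical operator vanishing on $\Gamma(\Lambda^\bullet L\dual\otimes S^{<q}B\dual)$. The restriction of $\phi_q$ to $\Gamma(S^q B\dual)$ is $C^\infty(M)$-linear (from $\Gamma(\Lambda^\bullet L\dual)$-linearity) and takes values in $\Gamma(\Lambda^\bullet L\dual\otimes\widehat{S}^{\geq q+N}B\dual)$. Because $S^q B\dual$ is a finite-rank vector bundle, the standard tensor--Hom identification
$$
\Hom_{C^\infty(M)}\!\big(\Gamma(S^q B\dual),\,\Gamma(\Lambda^\bullet L\dual\otimes\widehat{S}^{\geq q+N}B\dual)\big)\;\cong\;\Gamma(\Lambda^\bullet L\dual\otimes\widehat{S}^{\geq q+N}B\dual\otimes S^q B)
$$
produces a unique $D_q\in\Gamma(\Lambda^\bullet L\dual\otimes\widehat{S}^{\geq q+N}B\dual\otimes S^q B)$ whose action on $\Gamma(S^q B\dual)$ agrees with $\phi_q|_{\Gamma(S^q B\dual)}$. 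As a $q$-th order vertical differential operator, $D_q$ annihilates $\Gamma(\Lambda^\bullet L\dual\otimes S^{<q}B\dual)$; hence $\phi_{q+1}:=\phi_q-D_q$ is $N$-shifting and vanishes on $\Gamma(\Lambda^\bullet L\dual\otimes S^{\leq q}B\dual)$, and the induction continues.

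\textbf{Step 2 (Convergence and equality).} By Lemma~\ref{lem:UniformConvDq}, the series $\sum_{q\geq 0}D_q$ converges uniformly to a continuous $\Gamma(\Lambda^\bullet L\dual)$-linear operator $\tilde\phi:\cR\to\cR$. On each subspace $\Gamma(\Lambda^\bullet L\dual\otimes S^k B\dual)$ only $D_0,\ldots,D_k$ contribute, and by construction $\phi-\sum_{q=0}^{k}D_q=\phi_{k+1}$ vanishes there, so $\phi$ and $\tilde\phi$ agree on every $\Gamma(\Lambda^\bullet L\dual\otimes S^k B\dual)$. The density principle in Remark~\ref{rmk:Conv&Conti}(2) then forces $\phi=\tilde\phi=\sum_{q\geq 0}D_q$.

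\textbf{Step 3 (Uniqueness).} Given any two such decompositions $\sum D_q=\sum D_q'$, set $E_q:=D_q-D_q'$ and restrict $\sum_q E_q=0$ to $\Gamma(S^k B\dual)$ successively for $k=0,1,2,\ldots$. Only $E_0,\ldots,E_k$ contribute on that subspace; assuming $E_0=\cdots=E_{k-1}=0$ by induction, one gets $E_k|_{\Gamma(S^k B\dual)}=0$, and the tensor--Hom identification of Step~1 upgrades this to $E_k=0$ as an element of $\Gamma(\Lambda^\bullet L\dual\otimes\widehat{S}^{\geq k+N}B\dual\otimes S^k B)$. The main technical obstacle is the tensor--Hom identification in Step~1, where the target $\Gamma(\Lambda^\bullet L\dual\otimes\widehat{S}^{\geq q+N}B\dual)$ is not the section space of a finite-rank bundle but an inverse limit of such; I would handle it componentwise, applying the standard duality for the finite-rank bundle $S^q B\dual$ against each finite piece $\Gamma(\Lambda^\bullet L\dual\otimes S^m B\dual)$ and reassembling via the $\cI$-adic completion, with the $N$-shifting hypothesis ensuring each projected piece lives at the correct filtration level.
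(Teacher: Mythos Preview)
Your proposal is correct and follows essentially the same approach as the paper: both construct $D_q$ recursively via the tensor--Hom identification on $\Gamma(S^q B\dual)$ (the paper writes $D_q|_{\Gamma(S^q B\dual)}=\phi^q-\sum_{i<q}D_i|_{\Gamma(S^q B\dual)}$, which is exactly your $\phi_q|_{\Gamma(S^q B\dual)}$), invoke Lemma~\ref{lem:UniformConvDq} for convergence, and establish equality and uniqueness by restriction to each symmetric degree.
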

\begin{proof}
It remains to show that a vertical operator $\phi$ satisfying \eqref{eq:DegPropertyEndo} can be uniquely written as $\phi = \sum_{q=0}^\infty D_q$. 
Suppose $\phi:\cR \to \cR$ is an $N$-shifting vertical operator. Let $\phi^q$ be the $R$-linear map
$$
\phi^q:\Gamma(S^q B\dual) = \Gamma(\Lambda^0 L\dual \otimes S^q B\dual) \into \Gamma(\Lambda^\bullet L\dual \otimes \widehat{S} B\dual ) \xto{\phi} \Gamma(\Lambda^\bullet L\dual \otimes \widehat{S}^{\geq q+N} B\dual ),
$$
where $R = C^\infty(M)$. 

Observe that the elements in $\Gamma(\Lambda^\bullet L\dual \otimes \widehat{S}^{\geq q+N} B\dual \otimes S^q B)$ are in one-to-one correspondence to the $R$-linear maps $\Gamma(S^q B\dual)  \to \Gamma(\Lambda^\bullet L\dual \otimes \widehat{S}^{\geq q+N} B\dual )$, where the correspondence is given by restricting the vertical differential operator to $\Gamma(S^q B\dual)$. 

Now we choose $D_q \in \Gamma(\Lambda^\bullet L\dual \otimes \widehat{S}^{\geq q+N} B\dual \otimes S^q B)$ inductively:
\begin{align}
D_0 \big|_{\Gamma(S^0B\dual)}& = \phi^0, \label{eq:D_0inDecomposition} \\
D_{q} \big|_{\Gamma(S^{q}B\dual)}& = \phi^q-\sum_{i=0}^{q-1} D_{i} \big|_{\Gamma(S^{q}B\dual)}.
\end{align} 
By Lemma~\ref{lem:UniformConvDq}, the series $\sum_{q=0}^\infty D_q$ converges uniformly to a vertical operator. By the construction, 
\begin{equation}\label{eq:BddOp&DiffOp}
\phi \big|_{\Gamma(S^q B\dual)} = \phi^q = \sum_{i=0}^q D_i \big|_{\Gamma(S^q B\dual)} = \sum_{i=0}^\infty D_i \big|_{\Gamma(S^q B\dual)}, \qquad \forall \, q \geq 0.
\end{equation}
Thus, we have $\phi = \sum_{q=0}^\infty D_q$. 

Finally, if $\phi = \sum_{q=0}^\infty D_q$, then by restricting $\phi$ to each $\Gamma(S^q B\dual)$, we obtain Equation~\eqref{eq:BddOp&DiffOp} which uniquely determines $D_q$, $q \geq 0$. 
\end{proof}

As a consequence, we can describe $\sE^{\geq N}$ in the following way.

\begin{corollary}
For each $N \in \ZZ$, the map
$$
 \prod_{q=0}^\infty \Gamma(\Lambda^\bullet L\dual \otimes \widehat{S}^{\geq q+N} B\dual \otimes S^q B) \to \sE^{\geq N}, \qquad (D_q)_{q=0}^\infty \mapsto \sum_{q=0}^\infty D_q
$$
is an isomorphism of $R$-modules. 
\end{corollary}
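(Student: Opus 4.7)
The plan is to package Proposition~\ref{prop:BddOp&DiffOp} and Lemma~\ref{lem:UniformConvDq} into the statement of this corollary, checking the three things one needs of an $R$-module isomorphism: well-definedness, bijectivity, and $R$-linearity.

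First I would address well-definedness. Given $(D_q)_{q=0}^\infty$ in $\prod_{q=0}^\infty \Gamma(\Lambda^\bullet L\dual \otimes \widehat{S}^{\geq q+N} B\dual \otimes S^q B)$, Lemma~\ref{lem:UniformConvDq} says the series $\sum_{q=0}^\infty D_q$ converges uniformly to some continuous linear operator $\phi$ on $\cR$; the proof of that lemma also shows $\phi$ sends $\Gamma(\Lambda^\bullet L\dual \otimes S^k B\dual)$ into $\Gamma(\Lambda^\bullet L\dual \otimes \widehat{S}^{\geq k+N} B\dual)$, because the tail estimate \eqref{five} in the lemma can be applied with each tail contributing only to sufficiently high filtration degree. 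Together with the evident $\Gamma(\Lambda^\bullet L\dual)$-linearity, this shows $\phi \in \sE^{\geq N}$, so the map is well-defined.

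Next, for bijectivity, surjectivity is exactly the existence half of Proposition~\ref{prop:BddOp&DiffOp}: any $\phi \in \sE^{\geq N}$ arises as $\sum_{q=0}^\infty D_q$ for some choice of $D_q \in \Gamma(\Lambda^\bullet L\dual \otimes \widehat{S}^{\geq q+N} B\dual \otimes S^q B)$. Injectivity is the uniqueness half: if $\sum_{q=0}^\infty D_q = \sum_{q=0}^\infty D_q'$, then restricting both series to $\Gamma(S^q B\dual)$ and using Equation~\eqref{eq:BddOp&DiffOp} inductively (i.e.\ applying it first at $q=0$ to get $D_0 = D_0'$, then at $q=1$ to get $D_1 = D_1'$, and so on) yields $D_q = D_q'$ for every $q \geq 0$. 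Hence the map is bijective.

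Finally, $R$-linearity is immediate: multiplication by $r \in R = C^\infty(M)$ acts componentwise on the factors $\Gamma(\Lambda^\bullet L\dual \otimes \widehat{S}^{\geq q+N} B\dual \otimes S^q B)$, the sum $\sum_q r \cdot D_q$ converges uniformly to $r \cdot \sum_q D_q$ by continuity of multiplication by $r$ on $\cR$, and addition of tuples clearly corresponds to addition of their (uniformly convergent) limiting operators. Since none of these steps require new ideas beyond what Proposition~\ref{prop:BddOp&DiffOp} already delivers, there is no real obstacle here; the only thing to be slightly careful about is that the inductive extraction of the $D_q$ from $\phi$ in \eqref{eq:D_0inDecomposition} and the following display actually determines each $D_q$ on all of $\Gamma(S^q B\dual)$, which is enough by $\Gamma(\Lambda^\bullet L\dual)$-linearity and continuity to pin it down as an element of $\Gamma(\Lambda^\bullet L\dual \otimes \widehat{S}^{\geq q+N} B\dual \otimes S^q B)$.
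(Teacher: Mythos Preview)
Your proposal is correct and matches the paper's approach exactly: the paper states this corollary without proof, as an immediate consequence of Proposition~\ref{prop:BddOp&DiffOp} (which gives the bijection) and Lemma~\ref{lem:UniformConvDq} (which gives well-definedness). Your unpacking of well-definedness, bijectivity, and $R$-linearity is precisely the intended reading; note that Lemma~\ref{lem:UniformConvDq} already states in its conclusion that the limit is an $N$-shifting vertical operator, so you need not appeal to its proof for that fact.
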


\subsubsection*{Koszul-type homotopy equation for $\cI$-adic filtration-shifting vertical operators}

Since 
\begin{align*}
\delta : & \Gamma(\Lambda^p L\dual \otimes S^q B\dual \otimes SB) \to \Gamma(\Lambda^{p+1} L\dual \otimes S^{q-1} B\dual \otimes SB), \\
\htpd: & \Gamma(\Lambda^p L\dual \otimes S^q B\dual \otimes SB) \to \Gamma(\Lambda^{p-1} L\dual \otimes S^{q+1} B\dual \otimes SB),
\end{align*}
we have that $\delta(\sE^{\geq N}) \subset \sE^{\geq N-1}$ and $\htpd(\sE^{\geq N}) \subset \sE^{\geq N+1}$. 
By Lemma~\ref{lem:HomotopyEq} and Proposition~\ref{prop:BddOp&DiffOp}, we have the following 

\begin{corollary}\label{cor:KoszulBddVO}
The operators 
\begin{align*}
\delta:\sE \to \sE, & \qquad \delta\Big(\sum_{q=0}^\infty D_q\Big) := \sum_{q=0}^\infty \delta(D_q), \\
\htpd:\sE \to \sE, &\qquad \htpd\Big(\sum_{q=0}^\infty D_q\Big) := \sum_{q=0}^\infty \htpd(D_q), \\
\sigma_0:\sE \to \sE, &\qquad \sigma_0\Big(\sum_{q=0}^\infty D_q\Big) := \sum_{q=0}^\infty \sigma_0(D_q)
\end{align*}
satisfy the equations 
\begin{gather*}
\delta^2=0, \qquad \htpd^2=0, \qquad \htpd \delta \htpd = \htpd,\\
[\delta,\htpd] = \delta \htpd + \htpd \delta = \id_\sE -\sigma_0.
\end{gather*}
\end{corollary}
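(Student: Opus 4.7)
The proof reduces, via the canonical decomposition from Proposition~\ref{prop:BddOp&DiffOp}, to four termwise identities already established in Lemma~\ref{lem:HomotopyEq}. Fix $\phi \in \sE^{\geq N}$ and write $\phi = \sum_{q=0}^\infty D_q$ uniquely with $D_q \in \Gamma(\Lambda^\bullet L\dual \otimes \widehat{S}^{\geq q+N} B\dual \otimes S^q B)$; the operators $\delta$, $\htpd$, $\sigma_0$ then act on $\phi$ term-by-term without ambiguity.

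The first task is to check that each action lands back inside $\sE$. Using the bidegree shifts recorded just above the corollary, each $\delta(D_q)$ lies in $\Gamma(\Lambda^\bullet L\dual \otimes \widehat{S}^{\geq q+N-1} B\dual \otimes S^q B)$ and each $\htpd(D_q)$ lies in $\Gamma(\Lambda^\bullet L\dual \otimes \widehat{S}^{\geq q+N+1} B\dual \otimes S^q B)$. Lemma~\ref{lem:UniformConvDq}, applied with shift parameters $N-1$ and $N+1$, guarantees uniform convergence of the corresponding series to elements of $\sE^{\geq N-1}$ and $\sE^{\geq N+1}$ respectively. For $\sigma_0$, projection onto the $S^0 B\dual$-component forces $\sigma_0(D_q)=0$ whenever $q+N \geq 1$, so only finitely many summands survive and $\sigma_0(\phi) \in \sE$ with no convergence issue.

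Given well-definedness, each of the four identities follows by applying Lemma~\ref{lem:HomotopyEq} to each $D_q$ and resumming:
\[
\delta^2(\phi) = \sum_q \delta^2(D_q) = 0, \quad \htpd^2(\phi) = \sum_q \htpd^2(D_q) = 0, \quad \htpd\delta\htpd(\phi) = \sum_q \htpd(D_q) = \htpd(\phi),
\]
and
\[
(\delta\htpd + \htpd\delta)(\phi) = \sum_q \big(D_q - \sigma_0(D_q)\big) = \phi - \sigma_0(\phi).
\]
I do not anticipate any real obstacle: the only delicate point is that compositions such as $\htpd\delta\htpd$ preserve the $q$-index of each summand, so the termwise resummation is legitimate, and this is automatic from the bidegree shifts cited above.
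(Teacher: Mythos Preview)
Your proof is correct and matches the paper's approach exactly: the paper merely states that the corollary follows from Lemma~\ref{lem:HomotopyEq} and Proposition~\ref{prop:BddOp&DiffOp}, and you have supplied precisely the termwise argument those references imply, including the well-definedness check via the bidegree shifts and Lemma~\ref{lem:UniformConvDq}. Your observation that $\delta$, $\htpd$, and $\sigma_0$ preserve the $S^qB$-index (so that the unique decomposition of Proposition~\ref{prop:BddOp&DiffOp} is respected under composition) is exactly the point that makes the resummation legitimate.
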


\section{Vertical isomorphisms of Fedosov dg manifolds}

Let $\cM = L[1]\oplus B$ be the Fedosov manifold associated with a Lie pair $(L,A)$, where $B = L/A$. We denote the function algebra of $\cM$ by $\cR = C^\infty(\cM) = \Gamma(\Lambda^\bullet L\dual \otimes \widehat{S}B\dual)$. 
Given two splittings $\ib_1$, $\ib_2$ of the short exact sequence \eqref{eq:splitting} and given torsion-free $L$-connections $\nabla_1$ and $\nabla_2$ on $B$, one has two Fedosov homological vector fields from Proposition~\ref{prop:FedosovDG}:
\begin{align}
Q_1 & = -\koszul + d_L^{\nabla_1} + X_1, \label{eq:Q_1} \\
Q_2 & = -\koszul + d_L^{\nabla_2} +  X_2 \label{eq:Q_2}
\end{align}
on $\cM$. 

\begin{definition}
A vertical operator on $\cR$ is called a \textbf{vertical isomorphism of Fedosov dg manifolds} from $(\cM,Q_1)$ to $(\cM, Q_2)$ if it is an isomorphism of dg algebras from $(\cR,Q_2)$ to $(\cR,Q_1)$. 
\end{definition}

It can be shown that any vertical isomorphism $\phi:(\cR,Q_2) \to (\cR,Q_1)$ preserves the $\cI$-adic filtration, i.e.\ $\phi \in \sE^{\geq 0} \subset \sE$. See Remark~\ref{rmk:VerAlgIsoBddBelow}.

Now we are ready to state the main theorem:

\begin{theorem}\label{thm:PhiIteration}
Let $(L,A)$ be a Lie pair with quotient $B=L/A$.
Given two Fedosov vector fields $Q_1$ and $Q_2$
on the Fedosov manifold $\cM=L[1]\oplus B$
(arising from two choices of splittings of the short exact sequence \eqref{eq:splitting}
and torsion-free $L$-connections on $B$),
there exists a unique vertical automorphism $\phi$ of the graded algebra $\cR=C^\infty(\cM)$
intertwining the two Fedosov vector fields $Q_1$ and $Q_2$,
i.e.\ satisfying $\phi\circ Q_2=Q_1\circ\phi$.
This vertical automorphism $\phi$ is the unique solution of the equation
\begin{equation}\label{eq:PhiIteration} \phi=1+ \htpd\eth(\phi) ,\end{equation}
where $\eth(\phi)=\Delta Q\circ\phi+[\koszul+Q_2,\phi]$ and $\Delta Q=Q_1-Q_2$.
Indeed, we have
\begin{equation}\label{eq:PhiSeries} \phi=\sum_{n=0}^\infty (\htpd\eth)^n(1)
=1+\sum_{n=0}^\infty (\htpd\eth)^n \big(\htpd(\Delta Q)\big)
.\end{equation}
\end{theorem}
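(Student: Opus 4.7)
The plan is to solve the fixed-point equation \eqref{eq:PhiIteration} in $\sE$ by iteration, and then identify the unique solution with the PBW-induced vertical algebra automorphism, which is already known to intertwine $Q_1$ and $Q_2$.

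The first step is to verify that $\eth$ preserves the $\cI$-adic filtration shift on $\sE$. Both $\Delta Q\circ\phi$ and $[\koszul+Q_2,\phi]$ lie in $\sE^{\geq N}$ whenever $\phi\in\sE^{\geq N}$: the former because $\Delta Q=(d_L^{\nabla_1}-d_L^{\nabla_2})+(X_1-X_2)$ itself lies in $\sE^{\geq 0}$ (the connection difference is fiberwise $C^\infty(M)$-linear, hence zero-shifting, while $X_1-X_2\in\Gamma(L\dual\otimes\widehat{S}^{\geq 2}B\dual\otimes B)$ shifts by at least one), and the latter by a parallel computation using $\koszul+Q_2=d_L^{\nabla_2}+X_2$. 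Combined with $\htpd(\sE^{\geq N})\subset\sE^{\geq N+1}$ from Corollary~\ref{cor:KoszulBddVO}, this shows that $\htpd\eth$ strictly raises the filtration shift, so $(\htpd\eth)^n(1)\in\sE^{\geq n}$. By Remark~\ref{rmk:Conv&Conti}, the series $\sum_{n=0}^\infty(\htpd\eth)^n(1)$ converges uniformly to a vertical operator $\phi$ satisfying \eqref{eq:PhiIteration}. Uniqueness is immediate: two solutions differ by a $\psi\in\sE$ with $\psi=\htpd\eth(\psi)$, and iteration forces $\psi\in\bigcap_N\sE^{\geq N}=\{0\}$. The second form of \eqref{eq:PhiSeries} follows from $\eth(1)=\Delta Q$ and the relation $(\htpd\eth)^{n+1}(1)=(\htpd\eth)^n\bigl(\htpd(\Delta Q)\bigr)$.

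The remaining claim, which drives the rest of the theorem, is that every vertical algebra automorphism $\phi\in\sE^{\geq 0}$ with $\phi(1)=1$ automatically satisfies the two gauge conditions $\sigma_0\phi=1$ and $\htpd\phi=0$. Writing $\phi=\sum_{q\geq 0}D_q$ via Proposition~\ref{prop:BddOp&DiffOp}, the degree-zero constraint on $\phi$ together with the inductive formula for $D_q$ forces each $D_q$ into $\Gamma(\widehat{S}^{\geq q}B\dual\otimes S^qB)$ with no $\Lambda^\bullet L\dual$ factor, while $\Gamma(\Lambda^\bullet L\dual)$-linearity combined with $\phi(1)=1$ gives $D_0=1$. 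For $q\geq 1$, $\widehat{S}^{\geq q}$ has no $S^0$ component, so $\sigma_0(D_q)=0$ and hence $\sigma_0\phi=1$; and since $\hat{\koszul}=\sum_i\eta^i\,\partial/\partial\xi^i$ annihilates polynomials in the $\eta^j$ alone, $\htpd(D_q)=0$ for every $q$, giving $\htpd\phi=0$. Applying the homotopy identity $\id_\sE-\sigma_0=\delta\htpd+\htpd\delta$ from Corollary~\ref{cor:KoszulBddVO} to such a $\phi$ and substituting the intertwining relation $\delta\phi=\eth(\phi)$ (which is $\phi Q_2=Q_1\phi$ rewritten via $Q_i=-\koszul+d_L^{\nabla_i}+X_i$) then yields $\phi=1+\htpd\eth(\phi)$.

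Combining everything: the PBW-derived map $\phi_{\mathrm{PBW}}=\id_{\Lambda^\bullet L\dual}\otimes(\pbw_2\inv\circ\pbw_1)\dual$ is, by Proposition~\ref{prop:PBWconstructionOfFedosov} and the discussion in the introduction, a vertical algebra automorphism intertwining $Q_1$ and $Q_2$; hence by the previous paragraph it solves \eqref{eq:PhiIteration} and, by the first, it coincides with the iterated solution---this gives existence. Uniqueness among all vertical algebra automorphisms intertwining $Q_1$ and $Q_2$ follows by the same argument applied to any candidate. The main anticipated obstacle is rigorously verifying the two gauge conditions for an arbitrary vertical algebra automorphism, which reduces to an inductive unwinding of the degree-and-filtration constraints in Proposition~\ref{prop:BddOp&DiffOp}; everything else is a formal application of the Koszul-type homotopy machinery of Corollary~\ref{cor:KoszulBddVO}.
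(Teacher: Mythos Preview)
Your proposal is correct and follows essentially the same architecture as the paper's proof: establish the gauge conditions $\sigma_0(\phi)=1$ and $\htpd(\phi)=0$ for any vertical automorphism intertwining $Q_1$ and $Q_2$, deduce the fixed-point equation~\eqref{eq:PhiIteration} from the homotopy identity of Corollary~\ref{cor:KoszulBddVO}, and use the strict filtration-raising property of $\htpd\eth$ both for uniqueness and for convergence of the series~\eqref{eq:PhiSeries}.

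Two points of comparison are worth recording. First, to obtain $D_q\in\Gamma(\Lambda^0 L^\vee\otimes\widehat{S}^{\geq q}B^\vee\otimes S^qB)$ (hence $\htpd(\phi)=0$), the paper invokes Lemma~\ref{lem:VerAuto} (the exponential form $\phi=\chi^\natural\circ e^Y$), whereas your direct degree-zero argument via the inductive construction of the $D_q$ in Proposition~\ref{prop:BddOp&DiffOp} bypasses that lemma entirely. Second, and more substantively, the paper's proof concludes ``the existence of the vertical isomorphism $\phi$ is established'' immediately after showing that the series~\eqref{eq:PhiSeries} solves~\eqref{eq:PhiIteration}, without directly verifying that this solution is an algebra automorphism or that it intertwines $Q_1$ and $Q_2$. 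Your explicit appeal to the PBW-derived map $\phi_{\mathrm{PBW}}=\id_{\Lambda^\bullet L^\vee}\otimes(\pbw_2^{-1}\circ\pbw_1)^\vee$ supplies precisely this existence input: once one vertical isomorphism is known, it must solve~\eqref{eq:PhiIteration} and therefore coincide with the series. In this respect your argument is logically more self-contained than the paper's own presentation, at the cost of importing the PBW construction rather than keeping the proof purely internal to the Koszul homotopy formalism.
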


Theorem~\ref{thm:PhiIteration} will be proved in Section~\ref{sec:VerIsoFedosovDG}.

\subsection{Vertical automorphisms of a Fedosov manifold}

In this subsection, we study the underlying algebra isomorphism $\phi:\cR \to \cR$ of a vertical isomorphism of Fedosov dg manifolds. 

\begin{definition}
A \textbf{vertical automorphism} of the Fedosov manifold $\cM = L[1] \oplus B$ is a continuous $\Gamma(\Lambda^\bullet L\dual)$-linear degree-preserving algebra isomorphism of the graded algebra $\cR = \Gamma(\Lambda^\bullet L\dual \otimes \widehat{S} B\dual) $.
\end{definition}

\begin{example}
If $\chi:B \to B$ is an isomorphism of vector bundles, then its dual map $\chi\dual:B\dual \to B\dual$ generates a vertical automorphism $\chi^\natural: \Gamma(\Lambda^\bullet L\dual \otimes \widehat{S} B\dual) \to \Gamma(\Lambda^\bullet L\dual \otimes \widehat{S} B\dual)$, 
$$
\chi^\natural(\omega \otimes (\beta_1 \odot \cdots \odot \beta_k)) = \omega \otimes (\chi\dual(\beta_1) \odot \cdots \odot \chi\dual(\beta_k)),
$$
where $\omega \in \Gamma(\Lambda^\bullet L\dual)$, $\beta_1, \cdots, \beta_k \in \Gamma(B\dual)$, and $k \geq 1$. The continuity of $\chi^\natural$ is guaranteed by Lemma~\ref{lem:ContiCriterion}.
\end{example}

By the continuity assumption, we have that, for any $f_k \in \Gamma(\Lambda^\bullet L\dual \otimes S^{k} B\dual)$, the sum $\sum_{k=0}^\infty \phi(f_k)$ converges to $\phi\big(\sum_{k=0}^\infty f_k\big)$ in $\cR$. See Remark~\ref{rmk:Conv&Conti}. Furthermore, by the algebra morphism property, each component $\phi(f_k)$ is determined by its restriction to $\Gamma(B\dual) = \Gamma(\Lambda^0 L\dual \otimes S^1 B\dual)$, and hence so is $\phi$. The restriction $\phi\big|_{\Gamma(B\dual)}$ has the decomposition
\begin{equation}\label{eq:PhiDecomposition}
\phi\big|_{\Gamma(B\dual)} = \sum_{k=0}^\infty \phi_k, 
\end{equation}
where $\phi_k$ is the composition 
$$
\begin{tikzcd}
\phi_k:\Gamma(B\dual) =\Gamma(\Lambda^0 L\dual \otimes S^1 B\dual) \ar[r,"\phi"]& \Gamma(\Lambda^\bullet L\dual \otimes \widehat{S} B\dual) \ar[r,two heads,"\pr_k"] &\Gamma(\Lambda^\bullet L\dual \otimes S^{k} B\dual).
\end{tikzcd}
$$

\begin{remark}\label{rmk:NoLInPhi}
Since $\phi$ preserves the grading, we have that 
$$
\phi_k:\Gamma(B\dual) \to \Gamma(S^k B\dual) = \Gamma(\Lambda^0 L\dual \otimes S^k B\dual).
$$
\end{remark}

The map $\phi_k:\Gamma(B\dual) \to \Gamma(S^k B\dual)$ will be called the \textbf{$k$-th component} of the vertical automorphism $\phi$.
The first component of $\phi$ is also referred to as the \textbf{linear} component. We say the linear component of $\phi$ is \textbf{trivial} if  $\phi_1 = \id_{B\dual}$.

The following two lemmas describe the zeroth and the first components of $\phi$.

\begin{lemma}\label{lem:ZeroCompPhi}
If $\phi$ is a vertical automorphism, then $\phi_0 =0$. 
\end{lemma}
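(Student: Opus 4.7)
The plan is to combine continuity of $\phi$ with the reducedness of $R=C^\infty(M)$ to squeeze $\phi_0$ to zero, using nothing beyond the definitions already at hand.

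First I would exploit continuity. Because $\phi\colon\cR\to\cR$ is continuous with respect to the $\cI$-adic topology, and the neighborhoods $\{\cI^k\}$ generate that topology at $0$, the preimage $\phi\inv(\cI)$ is open, so there exists an integer $N\ge 1$, \emph{independent of $\beta$}, such that $\phi(\cI^N)\subset\cI$. Now for any $\beta\in\Gamma(B\dual)\subset\cI$, the $N$-fold product $\beta^N\in\Gamma(S^N B\dual)$ lies in $\cI^N$, and since $\phi$ is a unital algebra homomorphism,
\begin{equation*}
\phi(\beta)^N=\phi(\beta^N)\in\cI.
\end{equation*}

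Next I would project modulo $\cI$. The canonical splitting $\cR=\Gamma(\Lambda^\bullet L\dual)\oplus\cI$ as $R$-modules identifies the quotient $\cR/\cI$ with $\Gamma(\Lambda^\bullet L\dual)$ and makes the projection $\pi\colon\cR\twoheadrightarrow\cR/\cI$ into a ring homomorphism. By Remark~\ref{rmk:NoLInPhi}, the decomposition $\phi(\beta)=\sum_{k\ge 0}\phi_k(\beta)$ satisfies $\phi_0(\beta)\in\Gamma(S^0 B\dual)=R$ and $\phi_k(\beta)\in\cI$ for $k\ge 1$, so $\pi\bigl(\phi(\beta)\bigr)=\phi_0(\beta)$. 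Applying $\pi$ to the displayed inclusion gives
\begin{equation*}
\phi_0(\beta)^N=\pi\bigl(\phi(\beta)\bigr)^N=\pi\bigl(\phi(\beta)^N\bigr)=0
\end{equation*}
in $\Gamma(\Lambda^\bullet L\dual)$, and hence in its subring $R$.

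Finally, the ring $R=C^\infty(M,\KK)$ is reduced: pointwise, $f(m)^N=0$ forces $f(m)=0$, whether $\KK=\RR$ or $\KK=\CC$. Therefore $\phi_0(\beta)=0$ for every $\beta\in\Gamma(B\dual)$, establishing $\phi_0=0$. I anticipate no serious obstacle; the only point to keep straight is that continuity at zero supplies a \emph{uniform} $N$ valid for all $\beta\in\Gamma(B\dual)$, after which the argument is purely formal and does not even require $\phi$ to be invertible.
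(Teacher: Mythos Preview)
Your proof is correct and takes a genuinely different route from the paper. The paper argues by contradiction: assuming some $\phi_0(\beta)(x_0)=c\neq 0$, it feeds the geometric-type series $\sum_{n\geq 1}(\beta/c)^n$ through $\phi$, projects to $\Gamma(S^0 B^\vee)\cong C^\infty(M)$, and observes that the value at $x_0$ would be the divergent series $\sum_{n\geq 1}1$. Your argument is cleaner and more structural: continuity at $0$ produces a single exponent $N$ with $\phi(\cI^N)\subset\cI$, the algebra-homomorphism property turns this into $\phi_0(\beta)^N=0$ in $R$ after projecting modulo $\cI$, and reducedness of $C^\infty(M,\KK)$ finishes. (The one step you left implicit---that $N\geq 1$---follows because $\phi(1)=1\notin\cI$ rules out $\phi(\cR)\subset\cI$; and even if $N=0$ were allowed, $\phi(\cR)\subset\cI$ would give $\phi_0=0$ immediately.) As you note, your version does not use invertibility of $\phi$, only that it is a continuous $\Gamma(\Lambda^\bullet L^\vee)$-linear algebra endomorphism; the paper's approach also essentially uses only continuity and the homomorphism property, but your formulation makes this independence of the automorphism hypothesis more transparent and avoids any pointwise or series manipulation.
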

\begin{proof}
If $\phi_0 \neq 0$, then there exists a nonzero $\beta \in \Gamma(B\dual)$ such that $\phi_0(\beta) \neq 0$ in $C^\infty(M) \cong \Gamma(S^0 B\dual)$. Assume $x_0 \in M$ is a point such that $c = \phi_0(\beta)(x_0) \neq 0$. Then, by the continuity of $\phi$, 
\begin{align*}
\phi\Big(\sum_{n=1}^\infty (\frac{\beta}{c})^n\Big) & = \sum_{n=1}^\infty (\phi(\frac{\beta}{c}))^n \\
& = \sum_{n=1}^\infty \big(\phi_0(\frac{\beta}{c}) +(\phi(\frac{\beta}{c})-\phi_0(\frac{\beta}{c}))\big)^n \\
& = \Big(\sum_{n=1}^\infty (\phi_0(\frac{\beta}{c}))^n \Big)+ \Big( (\phi(\frac{\beta}{c})-\phi_0(\frac{\beta}{c}) ) \cdot \Theta\Big),
\end{align*}
for some $\Theta \in \Gamma(\widehat{S}B\dual)$. Since $\phi(\frac{\beta}{c})-\phi_0(\frac{\beta}{c}) \in \Gamma(\widehat{S}^{\geq 1} B\dual)$, by projecting to $\Gamma(S^0 B\dual) \cong C^\infty(M)$, we get a well-defined smooth function $f =\sum_{n=1}^\infty (\phi_0(\frac{\beta}{c}))^n \in C^\infty(M)$. This is a contradiction because the value of $f$ at $x_0$ is the divergent series $\sum_{n=1}^\infty 1^n$.
\end{proof}

\begin{remark}\label{rmk:VerAlgIsoBddBelow}
Due to the algebra morphism property and Lemma~\ref{lem:ZeroCompPhi}, any vertical isomorphism preserves the $\cI$-adic filtration.
\end{remark}

\begin{lemma}\label{lem:1stComponentPhi}
For any vertical automorphism $\phi$, there exists a vector bundle isomorphism $\chi:B \to B$ such that $(\chi^\natural \circ \phi  )_1 = \id_{B\dual}$.
\end{lemma}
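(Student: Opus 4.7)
The plan is to show that the linear component $\phi_1$ is an $R$-linear isomorphism of $\Gamma(B\dual)$, where $R=C^\infty(M)$, so that it arises from a vector bundle isomorphism of $B\dual$, and then to take $\chi:B\to B$ to be the bundle isomorphism dual to $\phi_1\inv$.

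The $R$-linearity of $\phi_1$ is immediate from the $\Gamma(\Lambda^\bullet L\dual)$-linearity of $\phi$, and Remark~\ref{rmk:NoLInPhi} ensures that $\phi_1$ lands in $\Gamma(B\dual)=\Gamma(\Lambda^0 L\dual\otimes S^1B\dual)$, so it corresponds to a vector bundle endomorphism of $B\dual$. The key step is to show that $\phi_1$ is invertible. For this I consider the inverse automorphism $\psi=\phi\inv$ and examine $\phi\circ\psi=\id$ restricted to $\Gamma(B\dual)$. By Remark~\ref{rmk:VerAlgIsoBddBelow}, $\phi$ preserves the $\cI$-adic filtration; since Remark~\ref{rmk:NoLInPhi} applied to $\psi$ gives $\psi_k(\beta)\in\Gamma(S^k B\dual)\subset\cI^k$, I obtain $\phi(\psi_k(\beta))\in\cI^k$, so the projection of $\phi(\psi_k(\beta))$ onto $\Gamma(S^1B\dual)$ vanishes for all $k\geq 2$. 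Continuity of $\phi$ together with $\psi_0=0$ (Lemma~\ref{lem:ZeroCompPhi}) then reduces the identity $\beta=\phi(\psi(\beta))$, after projecting to $\Gamma(B\dual)$, to $\phi_1(\psi_1(\beta))=\beta$. The symmetric argument applied to $\psi\circ\phi=\id$ yields $\psi_1\circ\phi_1=\id$, so $\phi_1$ is a bundle isomorphism with inverse $\psi_1$.

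Having a bundle isomorphism $\phi_1\inv:B\dual\to B\dual$, I take $\chi:B\to B$ to be the unique bundle isomorphism with $\chi\dual=\phi_1\inv$. Since $\chi^\natural$ is defined by applying $\chi\dual$ factor-wise, it preserves the symmetric degree and acts on $\Gamma(B\dual)$ as $\chi\dual$. Hence
\begin{equation*}
(\chi^\natural\circ\phi)_1(\beta)=\pr_1\bigl(\chi^\natural(\phi(\beta))\bigr)=\chi\dual(\phi_1(\beta))=\beta
\end{equation*}
for every $\beta\in\Gamma(B\dual)$, which finishes the proof.

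The main obstacle is the invertibility of $\phi_1$ in the middle step. The clean way to handle it is to use the $\cI$-adic filtration preservation from Remark~\ref{rmk:VerAlgIsoBddBelow} (which itself relies on Lemma~\ref{lem:ZeroCompPhi} to ensure $\phi_0=\psi_0=0$) in order to dispose of all contributions of $\psi_k$ with $k\geq 2$ in a single stroke, rather than attempting a term-by-term expansion via the algebra-morphism property.
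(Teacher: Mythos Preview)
Your proof is correct and follows essentially the same approach as the paper: identify $\phi_1$ as a $C^\infty(M)$-module isomorphism of $\Gamma(B\dual)$, dualize it to obtain the bundle map $\chi$, and verify that $(\chi^\natural\circ\phi)_1=\id_{B\dual}$. The only difference is that the paper simply asserts that $\phi_1$ is an isomorphism, whereas you supply the (correct) verification via $\psi=\phi\inv$, the filtration-preservation of Remark~\ref{rmk:VerAlgIsoBddBelow}, and Lemma~\ref{lem:ZeroCompPhi}; this is a detail the paper leaves to the reader.
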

\begin{proof}
Given a vertical automorphism $\phi$, the first component $\phi_1:\Gamma(B\dual) \to \Gamma(B\dual)$ is an isomorphism of $C^\infty(M)$-modules, and its dual map is naturally identified with a vector bundle isomorphism $\chi:B \to B$. Since the given automorphism $\phi$ and the vertical automorphism $\chi^\natural$ induced by $\chi$ have the same first component, the first component of $\big((\chi^\natural)\inv \circ \phi \big)$ is $\id_{B\dual}$. Replacing $\chi$ by its inverse, we prove the lemma.
\end{proof}

Now we are ready to describe all the vertical automorphisms.

\begin{lemma}\label{lem:VerAuto}
Given any vertical automorphism $\phi$,
there exists a unique pair $(\chi,Y)$ consisting of an automorphism
$\chi$ of the vector bundle $B$ and a vertical vector field
$Y\in\Gamma(\widehat{S}^{\geq 2}B^\vee\otimes B)$ such that
\[ \phi=\chi^\natural\circ e^Y=\chi^\natural
\sum_{n=0}^\infty\frac{1}{n!}Y^n
=\sum_{n=0}^\infty\frac{1}{n!}\chi^\natural\circ
\overset{\text{$n$ times}}{\overbrace{Y\circ\cdots\circ Y}} .\]
\end{lemma}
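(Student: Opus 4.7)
The plan is to first peel off the linear part of $\phi$ via Lemma~\ref{lem:1stComponentPhi}, and then show that a vertical automorphism with trivial linear component is uniquely the exponential of a vertical vector field in $\Gamma(\widehat{S}^{\geq 2}B\dual\otimes B)$. By Lemma~\ref{lem:1stComponentPhi}, there is a unique vector bundle automorphism $\chi$ of $B$---namely the one dual to $\phi_1:\Gamma(B\dual)\to\Gamma(B\dual)$---such that $\psi:=(\chi^\natural)\inv\circ\phi$ satisfies $\psi_1=\id_{B\dual}$; Lemma~\ref{lem:ZeroCompPhi} further gives $\psi_0=0$. The problem thus reduces to showing that any such $\psi$ is of the form $e^Y$ for a unique $Y\in\Gamma(\widehat{S}^{\geq 2}B\dual\otimes B)$.

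For existence I would build $Y$ from its restriction $y:=Y|_{\Gamma(B\dual)}\colon \Gamma(B\dual)\to\Gamma(\widehat{S}^{\geq 2}B\dual)$ and extend by $\Gamma(\Lambda^\bullet L\dual)$-linearity and the derivation rule to a vertical vector field on $\cR$. Any such $Y$ belongs to $\sE^{\geq 1}$, so the series $e^Y=\sum\frac{1}{n!}Y^n$ converges in $\sE$ by Remark~\ref{rmk:Conv&Conti}(3). By Remark~\ref{rmk:NoLInPhi}, $\psi|_{\Gamma(B\dual)}=\id+\sum_{k\geq 2}\psi_k$ with $\psi_k(\beta)\in\Gamma(S^kB\dual)$. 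Decomposing the sought-after $y$ as $\sum_{k\geq 2}y_k$ with $y_k(\beta)\in\Gamma(S^kB\dual)$, the equation $e^Y(\beta)=\psi(\beta)$ splits into the system (one equation for each $k\geq 2$)
\[ y_k(\beta)+\Phi_k(y_2,\ldots,y_{k-1})(\beta)=\psi_k(\beta), \]
where $\Phi_k$ depends only on the lower-degree components $y_2,\ldots,y_{k-1}$, coming from the contributions of the higher powers $Y^n(\beta)$ with $n\geq 2$. This triangular system can be solved uniquely by induction on $k$, yielding a unique $y$ and hence a unique $Y$.

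Finally, the constructed $Y$ and $\psi$ are both continuous $\Gamma(\Lambda^\bullet L\dual)$-linear algebra automorphisms of $\cR$ that, by construction, agree on the algebra generators $\Gamma(B\dual)$; therefore $e^Y=\psi$ on all of $\cR$. Uniqueness of the pair $(\chi,Y)$ then follows because $\chi^\natural$ is forced by the linear component of $\phi$, and, once $\psi$ is fixed, $Y|_{\Gamma(B\dual)}$---and so $Y$ itself---is uniquely determined by the inductive solution of the above system. The main technical hurdle will be handling the combinatorics of $Y^n(\beta)$ cleanly enough to verify that the degree-$k$ symmetric component of $e^Y(\beta)-\beta$ is of the form $y_k(\beta)+\Phi_k(y_2,\dots,y_{k-1})(\beta)$ with the new unknown appearing linearly, so that the induction truly closes.
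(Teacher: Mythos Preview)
Your proposal is correct and follows essentially the same approach as the paper: reduce via Lemma~\ref{lem:1stComponentPhi} to the case $\psi_1=\id_{B^\vee}$, then decompose $Y=\sum_{k\geq 2}Y_k$ and solve for the $Y_k$ inductively from the triangular system obtained by matching the $S^kB^\vee$-components of $e^Y(\beta)$ with $\psi_k(\beta)$ (the paper writes your $\Phi_k$ explicitly as $\sum_{n\geq 2}\frac{1}{n!}\sum Y_{i_1}\circ\cdots\circ Y_{i_n}$ over $i_1+\cdots+i_n=k+n-1$, $2\leq i_j\leq k+1-n$). One small slip: in your final paragraph you write ``the constructed $Y$ and $\psi$ are both \ldots\ algebra automorphisms'', but you mean $e^Y$, not $Y$.
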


\begin{proof}
By Lemma~\ref{lem:1stComponentPhi}, it suffices to prove that, if $\phi_1=\id_{B^\vee}$,
there exists a unique vertical vector field
$Y\in\Gamma(\widehat{S}^{\geq 2}B^\vee\otimes B)$ such that $\phi=e^Y$.

Suppose such a derivation $Y$ of the algebra
$\Gamma(\widehat{S}B^\vee)$ exists. Let $Y_q\in \Gamma(S^q B\dual \otimes B)$, $q \geq 2$, be the vector fields such that $Y=\sum_{q=2}^\infty Y_q$. 
Since $Y_q$ sends $\Gamma(S^k B^\vee)$ to $\Gamma(S^{k+q-1}B^\vee)$,
a direct computation shows that
\begin{equation}\label{twenty-four} \phi_2=Y_2\circ j \end{equation}
and, for all $q\geq 3$,
\[ \phi_q=\pr_q\circ \phi\circ j=\pr_q\circ e^Y\circ j
=\sum_{k=1}^{q-1}\frac{1}{k!}\sum_{\substack{i_1+\cdots+i_k=q-1+k \\
2\leq i_1,\cdots,i_k\leq q+1-k}}(Y_{i_1}\circ\cdots\circ Y_{i_k})\circ j ,\]
where $j$ denotes the canonical inclusion of $\Gamma(B^\vee)$
in $\Gamma(\widehat{S}(B^\vee))$.
The last equation can be rewritten as
\begin{equation}\label{eq:log_iteration}
Y_q\circ j=\phi_q-\sum_{k=2}^{q-1}\frac{1}{k!}
\sum_{\substack{i_1+\cdots+i_k=q-1+k \\
2\leq i_1,\cdots,i_k\leq q+1-k}}
(Y_{i_1}\circ\cdots\circ Y_{i_k})\circ j,
\end{equation}
which shows that, for all $q\geq 3$, the component $Y_q$ is determined by $\phi_q$ and the components $Y_2,Y_3,\dots,Y_{q-1}$
and hence, by induction on $q$, merely by $\phi$ and the component $Y_2$,
and thus ultimately by $\phi$ through Equation~\eqref{twenty-four}.
The uniqueness claim is established.

We now proceed with the proof of the existence claim.
Suppose $\phi$ is an arbitrary vertical automorphism such that $\phi_1=\id_{B^\vee}$.
Equations~\eqref{twenty-four} and~\eqref{eq:log_iteration} together
define a vertical vector field
$Y=\sum_{q=2}^\infty Y_q\in\Gamma(\widehat{S}^{\geq 2}B^\vee\otimes B)$.
Since $Y^n(\cI^k)\subset\cI^{k+n}$, the series of operators
$\sum_{n=0}^\infty\frac{1}{n!}Y^n$ converges uniformly ---
according to Remark~\ref{rmk:Conv&Conti}~(3) --- to a vertical automorphism $e^Y$
satisfying the property $\pr_1\circ e^Y\circ j=\id$.
By the construction of $Y$, we have $\phi=e^Y$ as desired.
\end{proof}

\begin{remark}
The formula of $Y$ also can be derived by the power series of $\log$:
$$
Y = \log\big((\chi^\natural)\inv\circ \phi\big) = \sum_{n=1}^\infty \frac{(-1)^{n+1}}{n} \cdot \big((\chi^\natural)\inv \circ \phi - 1\big)^n.
$$
\end{remark}

\begin{remark}
The vector bundle isomorphism $\chi$ in Lemma~\ref{lem:VerAuto} is essentially the linear component of the vertical automorphism $\phi$. We separate the linear component since we can get the iteration equation \eqref{eq:log_iteration} to determine $Y$ in this way. Also see Remark~\ref{rmk:ExpY}~(1) for another technical difficulty with the linear component. 
\end{remark}

\begin{remark}\label{rmk:ExpY}
Lemma~\ref{lem:VerAuto} is a fiberwise result. One can consider the analogous statement for a vector space $V$. Lemma~\ref{lem:VerAuto} essentially says that the exponential map
$$
\widehat{S}^{\geq 2}V\dual \otimes V \to \Aut_1(\widehat{S}V\dual), \quad Y \mapsto e^Y
$$
is surjective. Here, $\widehat{S}^{\geq 2}V\dual \otimes V$ represents the space of formal vector fields on $V$ with coefficients of degree greater than one, and $\Aut_1(\widehat{S}V\dual)$ consists of continuous automorphisms with a trivial linear component. Nevertheless, the surjectivity of the exponential map fails if one considers general formal vector fields and general automorphisms. Following are a couple of issues: 
\begin{enumerate}
\item
For $Y \in S^1 V\dual \otimes V$, the linear component of $e^Y$ is essentially the exponential map of the corresponding linear endomorphism of $V\dual$. If $\KK= \RR$, then since the eigenvalues of the exponential of a real matrix must be positive, the map $e^Y:\widehat{S}V\dual \to \widehat{S}V\dual$ cannot be an arbitrary automorphism.

\item
For $Y \in S^0V\dual \otimes V$, the map $e^Y:\widehat{S}V\dual \to \widehat{S}V\dual$ might not even be well-defined. For example, let $V = \RR^1$ and $Y = \frac{d}{dt}$, where $t$ is the standard coordinate on $\RR^1$.  Then the constant term of $e^Y(\sum_{n=0}^\infty t^n)$ is the divergent series $\sum_{n=0}^\infty 1$ which is not defined. 
\end{enumerate}
\end{remark}

\subsection{Proof of Theorem~\ref{thm:PhiIteration}}\label{sec:VerIsoFedosovDG}

To prove Theorem~\ref{thm:PhiIteration}, we need the following lemmas.

\begin{lemma}\label{lem:hpartialInjectivity}
Let $Q_1$ and $Q_2$ be two Fedosov homological vector fields on $L[1]\oplus B$, 
\begin{equation}\label{eq:DeltaQ}
\Delta Q = Q_1 - Q_2 = d_L^{\nabla_1} - d_L^{ \nabla_2} + X_1 - X_2 \in \Gamma(\Lambda^1 L\dual \otimes  \widehat{S}^{\geq 1} B\dual \otimes B),
\end{equation}
and $\eth:\sE \to \sE$ be the operator defined by 
\begin{equation}\label{eq:partial}
\eth(\phi) =  \Delta Q \circ \phi + [\koszul+Q_2 ,\phi], \qquad \forall\, \phi \in \sE.
\end{equation}
Then the operator $\id_{\sE} - \htpd \eth: \sE \to \sE$ is injective.
\end{lemma}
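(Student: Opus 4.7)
The plan is to show that $\htpd\eth$ strictly raises the $\cI$-adic filtration on $\sE$, i.e.\ that $\htpd\eth(\sE^{\geq N})\subset\sE^{\geq N+1}$ for every $N\in\ZZ$. Once this is established, injectivity of $\id_\sE-\htpd\eth$ is immediate from a short iteration argument together with the Hausdorff property $\bigcap_{N\in\ZZ}\sE^{\geq N}=\{0\}$ of the $\cI$-adic filtration on $\cR$.

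First I would unpack the filtration shift of each piece of $\eth$. From \eqref{eq:DeltaQ}, $\Delta Q=(d_L^{\nabla_1}-d_L^{\nabla_2})+(X_1-X_2)$: the first summand is the tensorial difference of two $L$-connections, lying in $\Gamma(\Lambda^1 L\dual\otimes S^1 B\dual\otimes B)$, while the second lies in $\Gamma(\Lambda^1 L\dual\otimes\widehat{S}^{\geq 2}B\dual\otimes B)$. Hence $\Delta Q\in\Gamma(\Lambda^1 L\dual\otimes\widehat{S}^{\geq 1}B\dual\otimes B)$ is a first-order vertical differential operator with $\Delta Q\in\sE^{\geq 0}$. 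Similarly, $\koszul+Q_2=d_L^{\nabla_2}+X_2$ decomposes as the sum of the $0$-shifting covariant derivative $d_L^{\nabla_2}$ and the $1$-shifting vector field $X_2\in\Gamma(L\dual\otimes\widehat{S}^{\geq 2}B\dual\otimes B)$, so $\koszul+Q_2\in\sE^{\geq 0}$.

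Next I would invoke the (easily checked) fact that composition on $\sE$ is additive in the filtration shift: if $\phi_1\in\sE^{\geq N_1}$ and $\phi_2\in\sE^{\geq N_2}$, then $\phi_1\circ\phi_2\in\sE^{\geq N_1+N_2}$, as one verifies by decomposing any element of the image of $\phi_2$ into its homogeneous components and applying $\phi_1$ term by term using continuity. Consequently, for $\phi\in\sE^{\geq N}$ both $\Delta Q\circ\phi$ and $[\koszul+Q_2,\phi]$ lie in $\sE^{\geq N}$, so $\eth(\phi)\in\sE^{\geq N}$. Because $\htpd$ sends $\Gamma(\Lambda^\bullet L\dual\otimes\widehat{S}^{\geq q+N}B\dual\otimes S^q B)$ into $\Gamma(\Lambda^{\bullet-1}L\dual\otimes\widehat{S}^{\geq q+N+1}B\dual\otimes S^q B)$, Corollary~\ref{cor:KoszulBddVO} yields $\htpd(\sE^{\geq N})\subset\sE^{\geq N+1}$, and therefore $\htpd\eth(\sE^{\geq N})\subset\sE^{\geq N+1}$ as desired.

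The injectivity then follows in a single step: if $\phi\in\sE$ satisfies $\phi=\htpd\eth(\phi)$ and $\phi\in\sE^{\geq N_0}$, iterating the identity $N_0+k$ times gives $\phi\in\sE^{\geq N_0+k}$ for every $k\geq 0$, whence for each fixed $k\in\NN_0$,
\[ \phi\big(\Gamma(\Lambda^\bullet L\dual\otimes S^k B\dual)\big)\subset\bigcap_{m\geq 0}\Gamma(\Lambda^\bullet L\dual\otimes\widehat{S}^{\geq m}B\dual)=\{0\}. \]
Continuity of $\phi$ together with Remark~\ref{rmk:Conv&Conti}~(2) then forces $\phi=0$. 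The one nontrivial bookkeeping step is the first one---tracking the filtration shifts through the composition and commutator operations defining $\eth$ and verifying that $\htpd$ retains its strict raising property on all of $\sE$ (not just on vertical differential operators)---but granted the structural fact that $\sE$ is a filtered algebra and Corollary~\ref{cor:KoszulBddVO}, the rest of the argument is automatic.
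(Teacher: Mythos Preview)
Your argument is correct and follows the same strategy as the paper: establish that $\htpd\eth(\sE^{\geq N})\subset\sE^{\geq N+1}$ for every $N$, then iterate to conclude that any fixed point of $\htpd\eth$ lies in $\bigcap_N\sE^{\geq N}=\{0\}$. The paper phrases the last step via the finer decomposition $\sE^{\geq N}=\prod_{l\geq N}\sE^l$ and an induction on the components $D_l\in\sE^l$, whereas you iterate directly on the filtration $\sE^{\geq N}$; these are equivalent.

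One small imprecision worth fixing: you assert $\koszul+Q_2=d_L^{\nabla_2}+X_2\in\sE^{\geq 0}$, but $d_L^{\nabla_2}$ is \emph{not} $\Gamma(\Lambda^\bullet L\dual)$-linear (it differentiates the $\Lambda^\bullet L\dual$ factor via $d_L$), so it does not lie in $\sE$. What you actually need---and what holds---is that the \emph{commutator} $[d_L^{\nabla_2},\phi]$ is $\Gamma(\Lambda^\bullet L\dual)$-linear whenever $\phi$ is (a one-line Leibniz check), and that $d_L^{\nabla_2}$ preserves the $S^kB\dual$-degree, so $[d_L^{\nabla_2},\phi]\in\sE^{\geq N}$ for $\phi\in\sE^{\geq N}$. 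The paper makes exactly this observation, noting that $[d_L^{\nabla_2},\argument]$ sends $\Gamma(\Lambda^\bullet L\dual\otimes S^kB\dual\otimes S^qB)$ to itself with the exterior degree raised by one. With this correction your argument goes through unchanged.
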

\begin{proof}
Let $\sE^{k,-q}$ be the space 
$$
\sE^{k,-q}:= \Gamma(\Lambda^\bullet L\dual \otimes S^k B\dual \otimes S^q B).
$$
Then, given any $N \in \ZZ$,
\begin{equation}\label{eq:Edecomposition}
\sE^{\geq N} = \prod_{q=0}^\infty \prod_{k=q+N}^\infty \sE^{k,-q} 
= \prod_{l=N}^\infty \sE^l, 
\end{equation}
where $\sE^l = \prod_{q=0}^\infty \sE^{q+l,-q} = \prod_{q=0}^\infty \Gamma(\Lambda^\bullet L\dual \otimes S^{q+l} B\dual \otimes S^q B)$. It is clear that the composition in $\End_\KK(\cR)$ respects the decomposition \eqref{eq:Edecomposition}:
$$
\circ: \sE^{l_1} \times \sE^{l_2} \to \sE^{l_1 + l_2}.
$$
Therefore, 
$$
\big(\htpd(\eth -[d_L^{\nabla_2},\argument])\big)(\sE^l) \subset \sE^{\geq l+1}.
$$
Furthermore, it is straightforward to show that 
$$
[d_L^{\nabla_2},\argument]: \Gamma(\Lambda^\bullet L\dual \otimes S^k B\dual \otimes S^q B) \to \Gamma(\Lambda^{\bullet +1} L\dual \otimes S^k B\dual \otimes S^q B),
$$
and hence
\begin{equation}\label{eq:hpartialDeg}
\htpd \eth(\sE^l) \subset \sE^{\geq l+1}.
\end{equation}

Now assume $D \in \ker(\id_{\sE}-\htpd \eth)$. The operator $D$ can be decomposed as $D = \sum_{l=N}^\infty D_l$, where $D_l \in \sE^l$. Thus,
\begin{equation}\label{eq:hpartialIteration}
D = \htpd \eth(D) = \sum_{l=N}^\infty \htpd \eth(D_l).
\end{equation}
By projecting \eqref{eq:hpartialIteration} to $\sE^N$, it follows from \eqref{eq:hpartialDeg} that $D_N =0$. The same argument inductively shows that each component $D_l$ vanishes. Thus $D=0$, and the injectivity of $\id_{\sE}-\htpd \eth$ follows.
\end{proof}

To justify the convergence of the series \eqref{eq:PhiSeries}, we need the following lemma:

\begin{lemma}\label{lem:OperatorSeriesConvergence}
Let $F_n:\sE \to \sE$, $n=0,1, \cdots$, be a sequence of linear operators. If there exists $N_0 \in \ZZ$ such that
$$
F_n(\sE^{\geq N}) \subset \sE^{\geq N_0 + N+n}, \qquad \forall\,  N\in \ZZ, \, n \in \NN_0,
$$
then the series 
$$
F = \sum_{n=0}^\infty F_n
$$
is a well-defined linear operator on $\sE$.
\end{lemma}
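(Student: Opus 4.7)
The plan is a direct convergence argument using the filtration-shift hypothesis together with the convergence criterion for series of operators already recorded in Remark~\ref{rmk:Conv&Conti}. First I would fix an arbitrary $\phi\in\sE$ and choose $N\in\ZZ$ so that $\phi\in\sE^{\geq N}$; by hypothesis, each $F_n(\phi)$ then lies in $\sE^{\geq N_0+N+n}$. Viewed as an element of $\End_{\KK}(\cR)$, this means $F_n(\phi)$ shifts the $\cI$-adic filtration on $\cR$ by at least $N_0+N+n$, so
$$F_n(\phi)(\cI^k)\subset \cI^{\,k+N_0+N+n}, \qquad \forall\, k\geq 0, \ n\geq 0.$$
Hence the image of the tail $\sum_{n\geq M} F_n(\phi)$, applied to any element of $\cI^k$, lies in $\cI^{\,k+N_0+N+M}$. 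Since $\cR$ is complete in the $\cI$-adic topology, Remark~\ref{rmk:Conv&Conti}~(3) then yields that the series $\sum_{n=0}^\infty F_n(\phi)$ converges uniformly to a continuous linear operator $F(\phi):\cR\to\cR$.

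Next I would verify that $F(\phi)$ actually lies in $\sE^{\geq N_0+N}$. The $\Gamma(\Lambda^\bullet L\dual)$-linearity of $F(\phi)$ is inherited from that of each partial sum, since $\Gamma(\Lambda^\bullet L\dual)$-linearity is preserved under uniform limits in $\End_{\KK}(\cR)$. For the shift property, I would use that $\Gamma(\Lambda^\bullet L\dual\otimes\widehat{S}^{\geq k+N_0+N} B\dual)$ is a closed subspace of $\cR$ in the $\cI$-adic topology, and that every partial sum $\sum_{n=0}^M F_n(\phi)$ sends $\Gamma(\Lambda^\bullet L\dual\otimes S^k B\dual)$ into this subspace; hence the limit $F(\phi)$ does as well, so $F(\phi)\in\sE^{\geq N_0+N}\subset\sE$.

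Finally, linearity of the resulting assignment $F:\sE\to\sE$ would follow from the linearity of each individual $F_n$: for $\phi,\psi\in\sE$ and $a,b\in\KK$, the equality $\sum_{n=0}^M F_n(a\phi+b\psi)=a\sum_{n=0}^M F_n(\phi)+b\sum_{n=0}^M F_n(\psi)$ passes to the uniform limit in the $\cI$-adic topology, yielding $F(a\phi+b\psi)=aF(\phi)+bF(\psi)$. The argument is essentially bookkeeping with the $\cI$-adic filtration, and I do not expect any substantive obstacle; the key observation is simply that the hypothesis provides a linearly growing shift in $n$ (starting from $N_0+N$), which is precisely what the convergence criterion in Remark~\ref{rmk:Conv&Conti}~(3) demands.
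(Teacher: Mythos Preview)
Your proposal is correct and follows essentially the same approach as the paper: both arguments fix $\phi\in\sE^{\geq N}$, use the hypothesis to obtain the tail estimate $\big(\sum_{n\geq M}F_n(\phi)\big)(\cI^k)\subset\cI^{k+N_0+N+M}$, and conclude that $\sum_n F_n(\phi)$ converges to a continuous operator in $\sE^{\geq N_0+N}$. The only difference is packaging---you invoke Remark~\ref{rmk:Conv&Conti}~(3) (up to a harmless index shift when $N_0+N<0$) while the paper unpacks the convergence by hand via homogeneous components; you also spell out the $\Gamma(\Lambda^\bullet L\dual)$-linearity of $F(\phi)$ and the linearity of $F$, which the paper leaves implicit.
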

\begin{proof}
Given any element $\psi \in \sE$, since $\sE = \bigcup_{N \in \ZZ} \sE^{\geq N}$, we may assume $\psi \in \sE^{\geq N}$ for some $N\in \ZZ$. By the assumption, we have that $F_n(\psi) \in \sE^{\geq N_0 + N+n}$ for any $n \geq 0$. 

Given any $f=\sum_{k=0}^\infty f_k \in \Gamma(\Lambda^\bullet L\dual \otimes \widehat{S}B\dual)$ with $f_k \in \Gamma(\Lambda^\bullet L\dual \otimes S^k B\dual)$, write
$$
F_n(\psi)(f_k) = \sum_{l=N_0+N+n+k}^\infty f_{n,k,l} \in \cI^{N_0+N+n+k} = \Gamma(\Lambda^\bullet L\dual \otimes \widehat{S}^{\geq N_0+N+n+k} B\dual),
$$
where $f_{n,k,l}$ is the component in $ \Gamma(\Lambda^\bullet L\dual \otimes {S}^{l} B\dual)$. Therefore,
\begin{align*}
F(\psi)(f) & = \sum_{n,k=0}^\infty\sum_{l=N_0+N+n+k}^\infty f_{n,k,l} \\
& = \sum_{l=0}^\infty \left( \sum_{\substack{n+k = l-N_0 -N \\ n,k \geq 0 }} f_{n,k,l} \right).
\end{align*}
Since, for each $l$, there are only finite $n,k$ with the properties $n+k = l-N_0 -N$ and $n,k \geq 0$, we conclude that $F(\psi)(f)$ is a well-defined element in $\Gamma(\Lambda^\bullet L\dual \otimes \widehat{S}B\dual)$. The linearity of $F(\psi)$ is clear. The continuity of $F(\psi)$ follows from Lemma~\ref{lem:ContiCriterion} together with the observation that $F(\psi)(\cI^k) \subset \cI^{N_0+N+k}, \, \forall k \geq 0$.  Therefore, 
$$
F(\psi) \in \sE^{\geq N_0+N}.
$$
Since $\psi$ is arbitrary, the proof is complete. 
\end{proof}

Now we are ready to prove Theorem~\ref{thm:PhiIteration}.

\begin{proof}[Proof of Theorem~\ref{thm:PhiIteration}]
Let $Q_1$ and $Q_2$ be two Fedosov homological vector fields on $\cM=L[1]\oplus B$.
Suppose there exists a vertical isomorphism of Fedosov dg manifolds
$\phi:(\cR,Q_2)\to(\cR,Q_1)$, i.e.\ a continuous, $\Gamma(\Lambda^\bullet L^\vee)$-linear,
degree-preserving automorphism of the algebra $\cR$ such that
\begin{equation}\label{eq:IsoFedosovDG}
\phi\circ Q_2=Q_1\circ\phi
.\end{equation}
Equation~\eqref{eq:IsoFedosovDG} is equivalent to
\begin{equation}\label{eq:ver2IsoFedosovDG}
\Delta Q\circ\phi+[Q_2,\phi]=0
,\end{equation}
where $\Delta Q=Q_1-Q_2$.

Let $\phi = \sum_{q=0}^\infty D_q$ be the decomposition established in Proposition~\ref{prop:BddOp&DiffOp}. 
Note that, by Lemma~\ref{lem:VerAuto}, there exists a pair of an automorphism $\chi: B \to B$ of vector bundles and a vertical vector field $Y\in \Gamma(\widehat{S}^{\geq 2} B\dual \otimes B)$ such that $\phi=\chi^\natural\circ e^Y$. In particular, each $D_q$ is a $0$-form, i.e.\ $D_q \in \Gamma(\Lambda^0 L\dual \otimes \widehat{S} B\dual \otimes S^q B)$. Since the homotopy operator $\htpd$ vanishes on $0$-forms, we have that $\htpd(\phi) = \sum_{q=0}^\infty \htpd(D_q) =0$.

Since $\phi$ is a vertical isomorphism, it preserves the $\cI$-adic filtration by Remark~\ref{rmk:VerAlgIsoBddBelow}. This implies that $D_q \in \Gamma(\Lambda^0 L\dual \otimes \widehat{S}^{\geq q} B\dual \otimes S^q B)$, and thus $ \sigma_0(D_q) =0$ for $q \geq 1$. Furthermore, since $\phi(1)=1$, it follows from Equation~\eqref{eq:D_0inDecomposition} that $D_0 =1$. Consequently, we have that $\sigma_0(\phi) = \sum_{q=0}^\infty \sigma_0 (D_q) = 1$.

It follows from Corollary~\ref{cor:KoszulBddVO} and Equation~\eqref{eq:ver2IsoFedosovDG} that
\[ \phi=\sigma_0(\phi)+\delta \htpd(\phi)+\htpd\delta(\phi)
=1+\htpd\big([(\koszul+Q_2)-Q_2,\phi]\big)=1+\htpd\eth(\phi) ,\]
where $\eth$ is the operator defined by Equation~\eqref{eq:partial}. 
This computation proves that every vertical isomorphism $\phi:(\cR,Q_2)\to(\cR,Q_1)$
must satisfy Equation~\eqref{eq:PhiIteration}.
Since there is, according to Lemma~\ref{lem:hpartialInjectivity}, at most one $\cI$-adic filtration-shifting vertical operator
satisfying Equation~\eqref{eq:PhiIteration}, we have proved that, provided $\phi$ exists, it is unique.

Finally, since $(\htpd\eth)^k(\sE^{\geq N})\subset\sE^{\geq N+k}$ for all $k\in\NN_0$ and $N\in\ZZ$, it follows from Lemma~\ref{lem:OperatorSeriesConvergence} that the series \eqref{eq:PhiSeries} defines an operator $\phi$. It is clear that this operator $\phi$ satisfies Equation~\eqref{eq:PhiIteration}.
Therefore, the existence of the vertical isomorphism $\phi$ is established.
\end{proof}

\begin{remark}
According to Lemma~\ref{lem:VerAuto}, the isomorphism $\phi$ in Theorem~\ref{thm:PhiIteration} is of the form $\phi = \chi^\natural \circ e^Y$ for some vector bundle isomorphism $\chi:B \to B$ and some vertical vector field $Y \in \Gamma(\widehat{S}^{\geq 2} B\dual \otimes B)$. In fact, the isomorphism $\chi$ is the dual map of the first component $\phi_1:\Gamma(B\dual) \xto{\phi} \Gamma(\Lambda^\bullet L\dual \otimes \widehat{S}B\dual) \xto{\pr_1} \Gamma(S^1 B\dual)$ of $\phi$. Since $\pr_1 \circ (\htpd \eth)^k(1)\big|_{\Gamma(B\dual)} =0$ for any $k \geq 1$, it follows from \eqref{eq:PhiSeries} that the component $\phi_1$ is $\id$. Thus, we have $\chi = \id_B$ and $\phi = e^Y$.
\end{remark}

\begin{corollary}\label{cor:Y=LogPhi}
Every vertical isomorphism $\phi:(\cR,Q_2)\to(\cR,Q_1)$ of Fedosov dg manifolds
is necessarily the flow $\phi=e^Y$ of some vertical vector field
$Y\in\Gamma\big(\widehat{S}^{\geq 2}B^\vee \otimes B\big)$
and, indeed, we have
\begin{equation}\label{eq:Y=LogPhi}
Y=\log(\phi)=\sum_{k=1}^\infty\sum_{n_1,\cdots,n_k\geq 0}\frac{(-1)^{k+1}}{k}\cdot
(\htpd\eth)^{n_1}\big(\htpd(\Delta Q)\big)\circ
(\htpd\eth)^{n_2}\big(\htpd(\Delta Q)\big)\circ\dots\circ
(\htpd\eth)^{n_k}\big(\htpd(\Delta Q)\big),
\end{equation}
where $\Delta Q = Q_1 - Q_2$.
\end{corollary}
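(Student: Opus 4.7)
The plan is to combine Theorem~\ref{thm:PhiIteration} with the exponential description of vertical automorphisms provided by Lemma~\ref{lem:VerAuto}. The corollary makes two assertions, and I would treat them in turn.

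First, I would establish the existence of $Y \in \Gamma(\widehat{S}^{\geq 2} B\dual \otimes B)$ such that $\phi = e^Y$. By Lemma~\ref{lem:VerAuto}, $\phi$ admits a unique factorization $\phi = \chi^\natural \circ e^Y$ for some automorphism $\chi$ of the vector bundle $B$ and some vertical vector field $Y \in \Gamma(\widehat{S}^{\geq 2} B\dual \otimes B)$. To identify $\chi$ with $\id_B$, I would inspect the linear component $\phi_1 : \Gamma(B\dual) \to \Gamma(S^1 B\dual)$ of $\phi$ using the series $\phi = 1 + \sum_{n \geq 0} (\htpd \eth)^n(\htpd(\Delta Q))$ from Equation~\eqref{eq:PhiSeries}. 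Since $\htpd(\Delta Q) \in \Gamma(\Lambda^0 L\dual \otimes \widehat{S}^{\geq 2} B\dual \otimes B)$ already lies in $\sE^{\geq 1}$, and $\htpd \eth$ shifts the $\cI$-adic filtration by an additional $+1$ (Equation~\eqref{eq:hpartialDeg}), the $n$-th summand sends $\Gamma(S^1 B\dual)$ into $\Gamma(\Lambda^\bullet L\dual \otimes \widehat{S}^{\geq n+2} B\dual)$; projecting onto $\Gamma(S^1 B\dual)$ therefore annihilates every $n \geq 0$ term. This forces $\phi_1 = \id_{B\dual}$, hence $\chi = \id_B$ and $\phi = e^Y$.

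For the explicit formula~\eqref{eq:Y=LogPhi}, I would invert the exponential via the formal logarithm $Y = \log(\phi) = \sum_{k=1}^\infty \frac{(-1)^{k+1}}{k}(\phi - 1)^k$. The series converges in $\sE$ because $\phi - 1 = \sum_{n=0}^\infty (\htpd\eth)^n(\htpd(\Delta Q))$ lies in $\sE^{\geq 1}$, so $(\phi - 1)^k \in \sE^{\geq k}$ by the evident estimate $\sE^{\geq N_1} \circ \sE^{\geq N_2} \subset \sE^{\geq N_1 + N_2}$, and Lemma~\ref{lem:OperatorSeriesConvergence} then provides the convergence of the sum over $k$. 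Expanding each power $(\phi - 1)^k$ by distributivity of composition over the converging sums gives
\[
(\phi - 1)^k = \sum_{n_1, \ldots, n_k \geq 0} (\htpd\eth)^{n_1}(\htpd(\Delta Q)) \circ \cdots \circ (\htpd\eth)^{n_k}(\htpd(\Delta Q)),
\]
and substituting this into the logarithm series reproduces Equation~\eqref{eq:Y=LogPhi}. The uniqueness clause of Lemma~\ref{lem:VerAuto} then guarantees that this $\log(\phi)$ automatically lies in $\Gamma(\widehat{S}^{\geq 2} B\dual \otimes B)$, even though individual summands a priori only belong to $\sE$.

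The main technical obstacle will be justifying the distributive expansion of $(\phi - 1)^k$ and its substitution into the logarithm series as a rigorous identity in $\sE$. The crux is that each truncation of the defining series of $\phi - 1$ past the $n$-th term differs from $\phi - 1$ by an element of $\sE^{\geq n+2}$, so modulo any fixed $\sE^{\geq m}$ only finitely many index tuples $(n_1,\ldots,n_k)$ with $k \leq m$ contribute. Combined with the filtration estimate for composition, this allows one to match coefficients at every filtration level and pass to the limit unambiguously, thereby upgrading the formal manipulation to an operator identity in $\sE$.
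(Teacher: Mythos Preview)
Your proposal is correct and follows essentially the same route as the paper: the remark preceding the corollary uses Lemma~\ref{lem:VerAuto} together with the series~\eqref{eq:PhiSeries} to show $\phi_1=\id_{B^\vee}$ (hence $\chi=\id_B$ and $\phi=e^Y$), and the explicit formula~\eqref{eq:Y=LogPhi} is then obtained by substituting $\phi-1=\sum_{n\geq 0}(\htpd\eth)^n(\htpd(\Delta Q))$ into the logarithm series, exactly as you outline. Your filtration bookkeeping for the convergence and distributive expansion is slightly more explicit than what the paper records, but the argument is the same.
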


\subsection{Applications}

Given two different choices of a splitting of the short exact sequence \eqref{eq:splitting} and a torsion-free $L$-connection on $B$, one has two induced PBW isomorphisms $\pbw_1$ and $\pbw_2: \Gamma(SB) \to \frac{\cU(L)}{\cU(L)\Gamma(A)}$. Since the map 
\begin{equation}\label{eq:psiMap}
\psi:=\pbw_2\inv \circ \pbw_1: \Gamma(SB) \to \Gamma(SB)
\end{equation}
is an isomorphism of $R$-coalgebras, its dual map $\phi:\Gamma(\widehat{S}B\dual) \to \Gamma(\widehat{S}B\dual)$ is an isomorphism of $R$-algebras. Extending $\phi$ by $\Gamma(\Lambda^\bullet L\dual)$-linearity, we have a vertical automorphism $\id \otimes \phi$ of the Fedosov manifold $\cM$, which will be denoted by $\phi$ by abuse of notations. According to \cite[Section~2.2]{MR4325718}, the map 
\begin{equation}\label{eq:phiMap}
\phi= \id_{\Lambda^\bullet L\dual}\otimes (\pbw_2\inv\circ \pbw_1)\dual: \Gamma(\Lambda^\bullet L\dual \otimes \widehat{S} B\dual) \to \Gamma(\Lambda^\bullet L\dual \otimes \widehat{S} B\dual)
\end{equation}
is a vertical isomorphism of Fedosov dg manifolds from $(\cM,Q_1)$ to $(\cM,Q_2)$. 

Note that, since
\[ \pair{\phi(f)}{\theta}=\pair{f}{\psi(\theta)} \]
for all $\theta\in\Gamma(SB)$ and $f\in\Gamma\big(\widehat{S}(B^\vee)\big)$,
we have
\[ \pair{f}{\psi^{-1}(\theta)}=\pair{\phi^{-1}(f)}{\theta} \]
or, equivalently,
\begin{equation}\label{eq:something}
\psi^{-1}(\theta)=\theta\circ\phi^{-1}
.\end{equation}

\begin{corollary}\label{cor:PBWformula}
There exists a unique $Y \in \Gamma(\widehat{S}^{\geq 2} B\dual \otimes B)$, which is is given by the formula \eqref{eq:Y=LogPhi}, such that 
$$\pair{\phi(f)}{\theta} =\pair{e^Y (f)}{\theta}= \pair{f}{\pbw_2\inv \circ \pbw_1(\theta)}$$
for any $f \in \Gamma(\widehat{S}B\dual)$, $\theta \in \Gamma(SB)$. 
\end{corollary}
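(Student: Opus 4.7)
The plan is to obtain Corollary~\ref{cor:PBWformula} as a direct consequence of Corollary~\ref{cor:Y=LogPhi} applied to the particular vertical isomorphism $\phi$ built from the two PBW maps. The work has essentially been carried out in the preceding sections; all that remains is to check that this $\phi$ falls within the scope of the main theorem and to translate the resulting formula back through the duality pairing.

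First, I would recall (from the paragraph preceding the corollary, and as established in \cite{MR4325718,MR4150934}) that the $\Gamma(\Lambda^\bullet L\dual)$-linear extension
$$\phi = \id_{\Lambda^\bullet L\dual} \otimes (\pbw_2\inv \circ \pbw_1)\dual$$
is a vertical isomorphism of Fedosov dg manifolds from $(\cM, Q_1)$ to $(\cM, Q_2)$. This rests on the PBW construction of the Fedosov vector fields in Proposition~\ref{prop:PBWconstructionOfFedosov}: since each $Q_i$ equals the covariant derivative $d_L^{\nabla_i^\lightning}$ and $\pbw_2\inv \circ \pbw_1$ intertwines $\nabla_1^\lightning$ with $\nabla_2^\lightning$ as an isomorphism of $R$-coalgebras, the transposed map $\phi$ intertwines $Q_1$ and $Q_2$ as dg algebras.

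Second, I would invoke Corollary~\ref{cor:Y=LogPhi} on this particular $\phi$. That corollary produces a unique vertical vector field $Y \in \Gamma(\widehat{S}^{\geq 2} B\dual \otimes B)$ such that $\phi = e^Y$, with $Y$ given by the explicit series \eqref{eq:Y=LogPhi}. A small point to check is that the vector-bundle automorphism $\chi$ from the decomposition in Lemma~\ref{lem:VerAuto} is the identity, so that $\phi = e^Y$ rather than $\chi^\natural \circ e^Y$; but the remark following Theorem~\ref{thm:PhiIteration} already shows that the iteration \eqref{eq:PhiSeries} automatically produces an isomorphism whose linear component is $\id$, so this is automatic.

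Third, the pairing identity is tautological. By the very definition of the adjoint $(\pbw_2\inv \circ \pbw_1)\dual$ with respect to the duality pairing $\pair{\argument}{\argument}$ (compare \eqref{eq:something}), we have
$$\pair{\phi(f)}{\theta} = \pair{f}{\pbw_2\inv \circ \pbw_1(\theta)}$$
for every $f \in \Gamma(\widehat{S}B\dual)$ and $\theta \in \Gamma(SB)$; substituting $\phi = e^Y$ yields the stated identity. The only conceptual obstacle is bookkeeping: one must be sure that the isomorphism produced by Corollary~\ref{cor:Y=LogPhi} really coincides with the PBW-induced $\phi$. This identification is guaranteed by the uniqueness clause of Theorem~\ref{thm:PhiIteration}, since there is at most one vertical isomorphism $(\cR, Q_2) \to (\cR, Q_1)$, and the PBW-induced $\phi$ is one such.
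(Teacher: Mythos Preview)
Your proposal is correct and follows essentially the same route as the paper: the paper presents Corollary~\ref{cor:PBWformula} as an immediate consequence of the preceding discussion, which establishes (citing \cite{MR4325718}) that $\phi=\id\otimes(\pbw_2\inv\circ\pbw_1)\dual$ is a vertical isomorphism of Fedosov dg manifolds and records the duality relation $\pair{\phi(f)}{\theta}=\pair{f}{\psi(\theta)}$, after which Corollary~\ref{cor:Y=LogPhi} applies directly. Your additional remarks---explaining via Proposition~\ref{prop:PBWconstructionOfFedosov} why $\phi$ intertwines $Q_1$ and $Q_2$, and invoking the uniqueness clause of Theorem~\ref{thm:PhiIteration} to identify the PBW-induced $\phi$ with the one produced by the iteration---make explicit what the paper leaves implicit, but do not depart from its approach.
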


\subsubsection{Application to geodesic coordinate systems}

In the case $(L,A) = (T_M,M\times 0)$, the PBW map $\pbw^\nabla: \Gamma(ST_M) \to \cU(T_M)$ is the fiberwise $\infty$-order jet of the exponential map $\exp^\nabla:T_M \to M \times M$ along the zero section of $T_M$.  
In other words,  
$$
\pbw^\nabla(X_0 \odot \cdots \odot X_k)(f)\big|_p = X_0\big|_p X_1\big|_p \cdots X_k\big|_p( f \circ \exp_p^\nabla) 
$$
for all $X_0, \cdots , X_k \in \Gamma(T_M) = \Gamma(B)$, $f \in C^\infty(M)$ and $p\in M$. Here, the tangent vector $X_i\big|_p \in T_M\big|_p \cong T(T_M\big|_p)\big|_0$ is identified with the corresponding tangent vector of the vector space $T_M\big|_p$ at zero.

Assume we are given two connections $\nabla_1$ and $\nabla_2$ on $M$. Suppose $\exp_1$ and $\exp_2$ are the exponential maps associated with $\nabla_1$ and $\nabla_2$, respectively.  The map 
$$
\exp_{2,p}\inv \circ \exp_{1,p}: T_M \big|_p \to T_M \big|_p
$$  
is the coordinate transformation between two associated geodesic coordinate systems around a point $p \in M$. This transformation is related to $\pbw_2\inv \circ \pbw_1: \Gamma(ST_M) \to \Gamma(ST_M)$ by the following formula:
\begin{equation}
\pbw_2\inv \circ \pbw_1(X_1 \odot \cdots \odot X_k)(g)\big|_{0_p} = X_0\big|_p X_1\big|_p \cdots X_k\big|_p( g \circ \exp_{2,p}\inv \circ \exp_{1,p})
\end{equation}
for any $X_0, \cdots , X_k \in \Gamma(T_M) $ and $g \in C^\infty(T_M\big|_p)$.

\begin{corollary}
The $\infty$-order jet of the map $\exp_2\inv \circ \exp_1:T_M \to T_M$ along the zero section is given by $\pbw_2\inv \circ \pbw_1: \Gamma(ST_M) \to \Gamma(ST_M)$ which satisfies the equation 
$$
\pair{e^Y (f)}{\theta}= \pair{f}{\pbw_2\inv \circ \pbw_1(\theta)}, \qquad \forall \, f \in \Gamma(\widehat{S}T_M\dual), \, \theta \in \Gamma(ST_M).
$$
Here, $Y\in \Gamma(\widehat{S}^{\geq 2} T_M\dual \otimes T_M)$ is the vertical vector field given by the formula \eqref{eq:Y=LogPhi}.
\end{corollary}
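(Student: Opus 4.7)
The corollary is essentially the specialization of Corollary~\ref{cor:PBWformula} to the Lie pair $(L,A) = (T_M, M\times 0)$, so the plan is to verify two things: that the composite $\pbw_2\inv\circ\pbw_1$ has the claimed interpretation as the $\infty$-order jet of $\exp_2\inv\circ\exp_1$ along the zero section of $T_M$, and that the resulting equation is just \eqref{eq:pbw2pbw1} in this setting. First I would note that when $A=M\times 0$ the quotient bundle $B=L/A$ is canonically $T_M$, every splitting of \eqref{eq:splitting} is trivial, and a torsion-free $L$-connection on $B$ is simply a torsion-free linear connection on $T_M$. Thus the framework of Proposition~\ref{prop:PBW} degenerates to the classical PBW map $\pbw^\nabla\colon\Gamma(ST_M)\to\cU(T_M)$.

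Second, I would recall the known identification of $\pbw^\nabla$ with the fiberwise $\infty$-jet of the geodesic exponential $\exp^\nabla\colon T_M\to M\times M$ along the zero section, as established in \cite{MR4271478,MR3910470}. The cleanest way to see this is to check that the map defined by
\[ (X_0\odot\cdots\odot X_k)(f)\big|_p:=X_0\big|_p\,X_1\big|_p\cdots X_k\big|_p\,(f\circ\exp_p^\nabla) \]
satisfies the defining recursion \eqref{eq:PBWiteration}: on a geodesic $\gamma(t)=\exp_p^\nabla(tb)$ one has $\nabla_{\dot\gamma}\dot\gamma=0$, which matches the equation $\pbw(b^{\odot(n+1)})=\ib(b)\cdot\pbw(b^{\odot n})-\pbw(\nabla_{\ib(b)}(b^{\odot n}))$ after translating $\ib(b)$ back into a directional derivative on $M$. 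Uniqueness in Proposition~\ref{prop:PBW} then forces the two maps to coincide.

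Third, applying this identification to both $\nabla_1$ and $\nabla_2$ and composing, I would read off the stated formula
\[ \pbw_2\inv\circ\pbw_1(X_0\odot\cdots\odot X_k)(g)\big|_{0_p}
=X_0\big|_p\cdots X_k\big|_p\,(g\circ\exp_{2,p}\inv\circ\exp_{1,p}), \]
for $g\in C^\infty(T_M\big|_p)$. Here one uses that $\pbw_2\inv$ is precisely the complete symbol map associated with $\nabla_2$, whose geometric content is a Taylor expansion at the origin of $T_M|_p$ along $\exp_{2,p}$; composing with $\pbw_1$, which is a Taylor expansion along $\exp_{1,p}$, produces the Taylor expansion of the coordinate transformation $\exp_{2,p}\inv\circ\exp_{1,p}$.

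Finally, since $\pbw_2\inv\circ\pbw_1$ is precisely the map \eqref{eq:psiMap} from Corollary~\ref{cor:PBWformula} specialized to this Lie pair, the dual statement
\[ \pair{e^Y(f)}{\theta}=\pair{f}{\pbw_2\inv\circ\pbw_1(\theta)},\qquad f\in\Gamma(\widehat{S}T_M\dual),\ \theta\in\Gamma(ST_M), \]
is immediate from that corollary, with $Y\in\Gamma(\widehat{S}^{\geq 2}T_M\dual\otimes T_M)$ given by \eqref{eq:Y=LogPhi}. The only nontrivial step is the geometric identification in the second paragraph; everything else is either a specialization of the general framework or a direct consequence of Corollary~\ref{cor:PBWformula}, so no further technical obstacle is expected.
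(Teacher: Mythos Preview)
Your proposal is correct and follows essentially the same approach as the paper: the corollary is stated without a formal proof, as it is an immediate consequence of the preceding discussion identifying $\pbw^\nabla$ with the fiberwise $\infty$-jet of $\exp^\nabla$ (with a citation to \cite{MR4271478,MR3910470}) together with Corollary~\ref{cor:PBWformula} specialized to $(L,A)=(T_M,M\times 0)$. Your outline simply fills in the details of this implicit argument, including the verification of the recursion \eqref{eq:PBWiteration} via the geodesic equation, which the paper takes for granted.
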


\subsubsection{Application to the $L_\infty$ structure on polydifferential operators}

In \cite[Section~3.3]{MR4325718}, Bandiera, Sti\'{e}non and Xu proved the uniqueness of the $L_\infty$ structures on the polyvector fields and on the polydifferential operators of a given Lie pair $(L,A)$. The key lemma in their argument is \cite[Lemma~3.25]{MR4325718}: Let $\phi$ be the map defined in \eqref{eq:phiMap}. For $\eta^I \in \Gamma(S^{|I|}B\dual)$ and $\partial_{J_0} , \cdots , \partial_{J_k} \in \Gamma(SB)$, the map $\phi$ induces a pushforwarded polydifferential operator by the formula 
\begin{equation}\label{eq:PushedPolyDiffOp}
\phi_\ast(\eta^I \otimes \partial_{J_0} \otimes \cdots \otimes \partial_{J_k})(f_0, \cdots, f_k) = \phi\Big((\eta^I \otimes \partial_{J_0} \otimes \cdots \otimes \partial_{J_k})\big(\phi\inv(f_0), \cdots, \phi\inv(f_k) \big) \Big),
\end{equation}
for $f_0,\cdots, f_k \in C^\infty(\cM)$.

\begin{proposition}[{\cite[Lemma~3.25]{MR4325718}}] 
The pushforwarded polydifferential operator \eqref{eq:PushedPolyDiffOp} satisfies the equation 
\begin{equation}\label{eq:BSXLemma3.25}
\phi_\ast(\eta^I \otimes \partial_{J_0} \otimes \cdots \otimes \partial_{J_k}) = \eta^I \otimes \psi\inv(\partial_{J_0}) \otimes \cdots \otimes \psi\inv(\partial_{J_k}) + P,
\end{equation}
for some $P \in \Gamma(\widehat{S}^{>|I|}B\dual \otimes (S B)^{\otimes k+1})$. 
\end{proposition}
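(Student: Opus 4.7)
The plan is to unfold the definition of $\phi_\ast$ and use the fact that $\phi$ is a $\Gamma(\Lambda^\bullet L\dual)$-linear algebra automorphism to isolate the $\eta^I$-coefficient, then compare each conjugated differential operator $\phi \circ \partial_{J_i} \circ \phi\inv$ with $\psi\inv(\partial_{J_i})$ via the Hopf duality between $\Gamma(SB)$ and $\Gamma(\widehat{S}B\dual)$. By the algebra morphism property of $\phi$,
$$
\phi_\ast\bigl(\eta^I \otimes \partial_{J_0} \otimes \cdots \otimes \partial_{J_k}\bigr)(f_0, \ldots, f_k) = \phi(\eta^I)\cdot\prod_{i=0}^k (\phi\circ\partial_{J_i}\circ\phi\inv)(f_i).
$$
By Corollary~\ref{cor:Y=LogPhi}, $\phi = e^Y$ with $Y \in \Gamma(\widehat{S}^{\geq 2}B\dual \otimes B)$; since such $Y$ raises the $\cI$-adic fiber filtration by at least one, both $\phi$ and $\phi\inv$ reduce to the identity modulo $\Gamma(\widehat{S}^{\geq 1}B\dual)$, and in particular $\phi(\eta^I) = \eta^I + r_I$ for some $r_I \in \Gamma(\widehat{S}^{>|I|}B\dual)$. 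The term $r_I \prod_i (\phi\circ\partial_{J_i}\circ\phi\inv)(f_i)$ thus contributes a polydifferential operator with coefficient in $\widehat{S}^{>|I|}B\dual$, and so may be absorbed into $P$.

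The key step is to establish $(\phi\circ\partial_{J_i}\circ\phi\inv)(f) \equiv \psi\inv(\partial_{J_i})(f) \pmod{\Gamma(\widehat{S}^{\geq 1}B\dual)}$ for every $f \in \cR$. Since $\phi \equiv \id$ modulo $\widehat{S}^{\geq 1}B\dual$, evaluating $\phi(g)$ at $\eta = 0$ gives the same result as evaluating $g$ at $\eta = 0$, whence
$$
(\phi\circ\partial_{J_i}\circ\phi\inv)(f)\big|_{\eta=0} = \partial_{J_i}(\phi\inv f)\big|_{\eta=0} = \pair{\partial_{J_i}}{\phi\inv f}.
$$
On the other hand, the standard relation $\theta(f)\big|_{\eta=0} = \pair{\theta}{f}$ for $\theta \in \Gamma(SB)$, combined with Equation~\eqref{eq:something}, yields $\psi\inv(\partial_{J_i})(f)\big|_{\eta=0} = \pair{\psi\inv(\partial_{J_i})}{f} = \pair{\partial_{J_i}}{\phi\inv f}$. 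The two coincide, so the difference of the two operators takes values in $\Gamma(\widehat{S}^{\geq 1}B\dual)$.

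To conclude, a standard telescoping expansion of $\prod_i (\phi\circ\partial_{J_i}\circ\phi\inv)(f_i) - \prod_i \psi\inv(\partial_{J_i})(f_i)$ writes it as a sum of products in which each summand contains one factor of the form $\bigl((\phi\circ\partial_{J_j}\circ\phi\inv) - \psi\inv(\partial_{J_j})\bigr)(f_j)\in\Gamma(\widehat{S}^{\geq 1}B\dual)$, placing the whole difference in $\Gamma(\widehat{S}^{\geq 1}B\dual)$. After multiplication by $\eta^I$ this lies in $\Gamma(\widehat{S}^{>|I|}B\dual)$, and combined with the earlier $r_I$ contribution it yields the desired polydifferential operator $P$. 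The main obstacle is the identity in the middle paragraph: careful bookkeeping of filtrations is required since the order-$|J_i|$ operator $\partial_{J_i}$ can lower high-filtration parts of $\phi\inv f$ into the constant (in $\eta$) piece, and the argument really relies on the fact that $\phi$ and $\psi\inv$ are exchanged by the Hopf pairing between $\Gamma(SB)$ and $\Gamma(\widehat{S}B\dual)$.
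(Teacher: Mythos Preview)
Your proof is correct and takes a genuinely different route from the paper's. The paper keeps the outer $\phi$ intact, writing $\phi_\ast(D)=\phi\circ\big(\eta^I\otimes(\partial_{J_0}\circ\phi\inv)\otimes\cdots\big)$, then invokes Equation~\eqref{eq:something} to replace each $\partial_{J_i}\circ\phi\inv$ by $\psi\inv(\partial_{J_i})$ \emph{as an operator on $\cR$}, and finally splits $\phi=\id+(e^Y-\id)$ so that $P=(e^Y-\id)\circ\big(\eta^I\otimes\psi\inv(\partial_{J_0})\otimes\cdots\big)$. You instead distribute $\phi$ through the product via the algebra-morphism property, obtaining $\phi(\eta^I)\prod_i(\phi\circ\partial_{J_i}\circ\phi\inv)(f_i)$, and then compare each factor $\phi\circ\partial_{J_i}\circ\phi\inv$ with $\psi\inv(\partial_{J_i})$ only modulo $\widehat{S}^{\geq 1}B\dual$, i.e.\ after evaluation at $\eta=0$, before telescoping. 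Your path is a bit longer but actually more careful: Equation~\eqref{eq:something} is derived only as an identity of \emph{linear functionals} on $\Gamma(\widehat{S}B\dual)$, which is exactly the evaluation-at-zero statement you invoke; the stronger operator identity $\partial_J\circ\phi\inv=\psi\inv(\partial_J)$ the paper's shortcut appears to use does not hold in general (pairing both sides against an arbitrary $\tau\in\Gamma(SB)$ via $\pair{\tau}{\theta(f)}=\pair{\tau\odot\theta}{f}$ would force $\psi\inv(\tau\odot\theta)=\tau\odot\psi\inv(\theta)$, hence $\psi\inv=\id$). So your argument sidesteps a genuine subtlety that the paper's streamlined version glosses over.
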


By Corollary~\ref{cor:PBWformula}, we can give an easy proof of Bandiera--Sti\'{e}non--Xu's equation \eqref{eq:BSXLemma3.25}: Note that 
\begin{align*}
\phi_\ast(\eta^I \otimes \partial_{J_0} \otimes \cdots \otimes \partial_{J_k}) & = \phi \circ \big(\eta^I \otimes (\partial_{J_0} \circ \phi\inv) \otimes \cdots \otimes (\partial_{J_k}\circ \phi \inv) \big) \\
& = (\id + e^Y - \id) \circ \big(\eta^I \otimes \psi\inv(\partial_{J_0}) \otimes \cdots \otimes \psi\inv(\partial_{J_k}) \big)  \qquad \text{by Equation~\eqref{eq:something}} \\
& = \eta^I \otimes \psi\inv(\partial_{J_0}) \otimes \cdots \otimes \psi\inv(\partial_{J_k}) + P,
\end{align*}
where 
$$
P = \sum_{k=1}^\infty \frac{1}{k!} Y^k \circ \big(\eta^I \otimes \psi\inv(\partial_{J_0}) \otimes \cdots \otimes \psi\inv(\partial_{J_k})\big).
$$
Since $Y \in \Gamma(\widehat{S}^{\geq 2} B\dual \otimes Y)$, we have $P \in \Gamma(\widehat{S}^{>|I|}B\dual \otimes (S B)^{\otimes k+1})$. This proves Equation~\eqref{eq:BSXLemma3.25}.

\subsubsection{Application to Kapranov dg manifolds}

In \cite{MR4271478}, Laurent-Gengoux, Sti\'{e}non and Xu introduced Kapranov dg manifolds $A[1] \oplus B$ associated with a Lie pair $(L,A)$. 
Here, a \textbf{Kapranov dg manifold} structure on $A[1] \oplus B$ is a homological vector field $Q$ on $A[1] \oplus B$ such that both the inclusion $(A[1],d_A) \into (A[1]\oplus B, Q)$ and the projection $(A[1] \oplus B, Q) \onto (A[1],d_A)$ are morphisms of dg manifolds. An \textbf{isomorphism of Kapranov dg manifolds} from $A[1] \oplus B$ to $A'[1] \oplus B'$ is a pair $(\Psi,\psi)$ of morphisms of dg manifolds $\Psi:A[1] \oplus B \to A'[1] \oplus B'$ and $\psi: A[1] \to A'[1]$ such that the following two diagrams commute:
$$
\begin{tikzcd}
A[1] \oplus B \ar[r,"\Psi"] & A'[1] \oplus B' \\
A[1] \ar[u,hook] \ar[r,"\psi"'] & A'[1]\ar[u,hook] 
\end{tikzcd}
\qquad 
\begin{tikzcd}
A[1] \oplus B \ar[r,"\Psi"] \ar[d,two heads]& A'[1] \oplus B' \ar[d,two heads] \\
A[1]  \ar[r,"\psi"'] & A'[1].
\end{tikzcd}
$$

According to \cite[Proposition~4.9]{MR4271478}, a Kapranov dg manifold structure on $A[1] \oplus B$ is equivalent to an $A$-action $\varrho:\Gamma(A) \times \Gamma(SB) \to \Gamma(SB)$ by coderivations such that $\varrho_a(1) = 0$ for all $a \in \Gamma(A)$. Therefore, the action 
$$
\varrho_a(s) = \pbw\inv(\ia(a)\cdot \pbw(s)), \qquad \forall \, a \in \Gamma(A), \, s \in \Gamma(SB),
$$
defines a Kapranov dg manifold structure on $A[1] \oplus B$, where $\ia:A \into L$ is the inclusion map.

Given two choices of a splitting of the short exact sequence \eqref{eq:splitting} and a torsion-free $L$-connection on $B$, we have two associated PBW maps $\pbw_1$ and $\pbw_2$ which induce two Kapranov dg manifold structures $Q_1$ and $Q_2$ on $A[1] \oplus B$. Since the automorphism 
$$
\pbw_2\inv \circ \pbw_1:\Gamma(SB) \to \Gamma(SB)
$$ 
of filtered $R$-coalgebras intertwines the coderivations $\pbw_2\inv\big(\ia(a)\cdot \pbw_2(\argument)\big)$ and  $\pbw_1\inv\big(\ia(a)\cdot \pbw_1(\argument)\big)$ for all $a \in \Gamma(A)$, it determines an isomorphism of Kapranov dg manifolds by \cite[Proposition~4.11]{MR4271478}. Also see \cite[Proposition~5.24~(2)]{MR4271478}.
Consequently, Corollary~\ref{cor:PBWformula} provides an explicit formula of an isomorphism of Kapranov dg manifolds between $(A[1]\oplus B, Q_1)$ and $(A[1]\oplus B, Q_2)$.

\bibliographystyle{plain}
\bibliography{refFedosov}

\end{document}